\documentclass[11pt,reqno]{amsart}
\usepackage[T1]{fontenc}
\usepackage[utf8]{inputenc} 
\usepackage{lmodern}
\usepackage[english]{babel}
\usepackage{microtype}

\usepackage[%
    a4paper,
	left=2.5cm,       
	right=2.5cm,      
	top=3.5cm,        
	bottom=3.5cm,     
	heightrounded,    
	bindingoffset=0mm 
]{geometry}

\usepackage{graphicx} 
\usepackage{float} 

 \usepackage[parfill]{parskip} 
 
\usepackage{booktabs} 
\usepackage{array} 
\usepackage{paralist} 
\usepackage{verbatim} 
\usepackage{subfig} 
\usepackage{mathrsfs}
\usepackage{amssymb}
\usepackage{xcolor}
\usepackage{amsthm}
\usepackage{amsmath,amsfonts,amssymb,esint,hyperref}
\usepackage[noabbrev, capitalize]{cleveref}

\usepackage{autonum}

\usepackage{graphics,color}
\usepackage{enumerate, enumitem}
\usepackage{mathtools,centernot}
\usepackage{cases}
\usepackage{amsrefs}
\usepackage{bbm}
\usepackage{xfrac}

\usepackage{bookmark}

\newtheorem{theorem}{Theorem}[section]
\newtheorem{lemma}[theorem]{Lemma}

\newtheorem{definition}[theorem]{Definition}
\newtheorem{proposition}[theorem]{Proposition}
\newtheorem{remark}[theorem]{Remark}

\numberwithin{equation}{section}

\newcommand{\T}{\ensuremath{\mathbb{T}}}
\newcommand*{\R}{\ensuremath{\mathbb{R}}}

\newcommand{\eps}{\varepsilon}

\newcommand{\ham}{\dot{H}^{-\frac12}}

\newcommand{\mez}{\frac{1}{2}}

\renewcommand{\MR}[1]{} 

\usepackage{color, graphicx}
\usepackage{mathrsfs, dsfont}

\usepackage[]{hyperref}
\hypersetup{
    colorlinks=true,       
    linkcolor=red,          
    citecolor=blue,        
    filecolor=red,      
    urlcolor=cyan           
}

\def\div{\mathop{\rm div}\nolimits}    
 
\newcommand{\be}{\begin{equation}}
\newcommand{\ee}{\end{equation}}

\title{Hamiltonian compactness and dissipation for the  generalized SQG equation in the inviscid limit}

\author[L. De Rosa]{Luigi De Rosa}
\address[L. De Rosa]{Gran Sasso Science Institute, viale Francesco Crispi, 7, 67100 L’Aquila, Italy}
\email{luigi.derosa@gssi.it}

\author[U. K. Yuzbasioglu]{Utku Kemal Yuzbasioglu}
\address[U. K. Yuzbasioglu]{Gran Sasso Science Institute, viale Francesco Crispi, 7, 67100 L’Aquila, Italy}
\email{utkukemal.yuzbasioglu@gssi.it}

\date{\today}

\subjclass[2020]{35Q35 - 35Q86 - 35D30 - 76F02.}
\keywords{Generalized SQG equation - Weak solutions - Inviscid limits - Anomalous dissipation.}

\begin{document}

\begin{abstract}
We consider the dissipative generalized Surface Quasi-Geostrophic  equation with dissipation given by any fractional power of the Laplacian. In the inviscid limit, it is proved that anomalous dissipation of the Hamiltonian is prevented by the strong compactness of the solutions in the lowest norm that makes the nonlinearity well-defined. In fact, only the dynamics at certain frequencies matters.  The argument is quite robust as it applies regardless of the criticality regime and of the presence of a, possibly noncompact, external forcing. This reveals a more general mechanism behind some recent results obtained for the Navier--Stokes and the critical dissipative Surface Quasi-Geostrophic equations. Because of nonuniqueness issues, in our broader context it is important to work with Leray solutions enjoying suitable higher-order bounds. The existence of such solutions is shown and it might be of independent interest. Finally, we prove that the strong compactness is guaranteed for any initial datum with critical integrability, from which global existence of conservative, although Onsager's supercritical, weak solutions of the inviscid problem is deduced. This offers the largest class of initial data for which global existence is known so far, matching with the one considered by Delort at the endpoint.
\end{abstract}

\maketitle

\section{Introduction}
For any $\alpha\in (0,1]$ and $\gamma\in (0,\infty)$  we consider the dissipative Surface Quasi-Geostrophic equation
\begin{equation}\label{g-SQG diss}\tag{gSQG$_\nu$}
\left\{\begin{array}{l}
\partial_t\theta^\nu + u^\nu\cdot\nabla \theta^\nu + \nu (-\Delta)^\gamma \theta^\nu = f^\nu\\ 
u^\nu = \mathcal R^\perp_\alpha \theta^\nu\\
\theta^\nu(\cdot,0)=\theta^\nu_0
\end{array}\right. 
\end{equation}
posed on $\T^2\times [0,\infty)$. Above, $\mathcal R^\perp_\alpha := \nabla^\perp (-\Delta)^{-\alpha}$ is the orthogonal generalized Riesz transform, $\nu\in (0,1)$ is the viscosity parameter, $\theta^\nu_0$ is the initial datum and $f^\nu$ is the external body force. For convenience in the exposition we restrict ourselves to the case\footnote{Whenever the initial datum is only a periodic distribution, the zero-average condition should be interpreted as $\langle \theta^\nu_0,1\rangle =0$. This will not be specified anymore.}
\begin{equation}\label{initial data zero average}
\int_{\T^2}\theta^\nu_0(x)\,dx=0,
\end{equation}
and note that all the results obtained in the current paper generalize to nonzero-average initial data with minor modifications. Whenever $\int_{\T^2}f^\nu (x,t)\,dx=0$ for all $t>0$,  the condition \eqref{initial data zero average} is propagated in time by \eqref{g-SQG diss} and it allows for Sobolev norms of homogeneous type.

By varying\footnote{The case $\alpha=0$ is trivial since the nonlinearity vanishes.} $\alpha\in (0,1]$ one obtains a continuum of PDEs, with a generally quite different behavior depending on the relation between $\alpha$ and $\gamma$, i.e. depending on whether \eqref{g-SQG diss} gives a subcritical, critical or supercritical PDE. The critical threshold for \eqref{g-SQG diss} is $\alpha=1-\gamma$, giving global well-posedness\footnote{This was first obtained in the influential works \cites{CaffarelliVasseur2010,KNV07} for $\alpha=\gamma=\frac12$. Then, a ``slightly supercritical'' case was settled in \cites{DKV12,DKSV14}.} for sufficiently smooth initial data \cites{MX12,K10}.  Consequently, $\alpha<1-\gamma$ defines the supercritical regime while $\alpha>1-\gamma$ the subcritical one. See also \cites{MX11,KN09} and references therein. When $\alpha=1$, the nonlinearity in \eqref{g-SQG diss} coincides with the one of the incompressible Euler equations in vorticity form, while for $\alpha=\frac12$ it gives the one of the Surface Quasi-Geostrophic equation. 

Formally, by taking the $\dot H^{-\alpha}(\T^2)$ inner product (see \cref{S:notation} for the notation) of  the first equation in \eqref{g-SQG diss} with $\theta^\nu$, one obtains 
\begin{equation}\label{dissip en bal}
\frac12 \frac{d}{dt} \|\theta^\nu(t)\|^2_{\dot H^{-\alpha} } + \nu \|\theta^\nu(t)\|^2_{\dot H^{\gamma-\alpha}} = \langle f^\nu (t),\theta^\nu(t)\rangle_{\dot H^{-\alpha}},
\end{equation}
providing a perfect balance of the $\dot H^{-\alpha}(\T^2)$ norm, which we will refer to as the ``Hamiltonian'', when $\nu=0$. The $\dot H^{-\alpha}(\T^2)$ regularity is also the minimal one that makes the nonlinearity in \eqref{g-SQG diss} well-defined (see \cref{S:weak sol}).
It is important to note that writing the balance \eqref{dissip en bal} might be problematic already at the positive viscosity level since, if $\gamma$ is not large enough depending on $\alpha$, the system \eqref{g-SQG diss} may develop singularities in finite time \cite{K10}. We momentarily ignore this technical issue and address it later on in the paper. Understanding the behavior of \eqref{dissip en bal} as $\nu\rightarrow 0^+$ is key towards establishing a mathematical theory of fluid motion in turbulent regimes. In principle, even under the assumption of strong compactness\footnote{This allows to pass to the limit the first and last term in \eqref{dissip en bal} under the weak compactness of $\{f^\nu\}_\nu$.} in $\dot H^{-\alpha}(\T^2)$, the dissipation $\nu \|\theta^\nu(t)\|^2_{\dot H^{\gamma-\alpha}}$ may not vanish, in turn producing the so-called ``anomalous dissipation'' phenomenon. In three-dimensional turbulence, at least in the the classical setting of the incompressible Navier--Stokes equations, anomalous dissipation experimentally arises together with a uniform-in-viscosity power law decay of the energy spectrum \cite{Frisch95} and it is a cornerstone of the best currently available theories from Kolmogorov \cite{K41} and Onsager \cite{O49}. Therefore, in three space dimensions, strong compactness and anomalous dissipation seem to coexist quite well. 

However, for particular values of $\alpha$ and $\gamma$, recent results have shown that strong compactness in the Hamiltonian norm is incompatible with anomalous dissipation for \eqref{g-SQG diss}. For $\alpha=\gamma=1$, i.e. the Navier--Stokes equations, this was first obtained in \cite{LMP21}, while the case $\alpha=\gamma=\frac12$ was addressed in \cite{DLP25}. The work \cite{CLLS16} was the precursor, while further extensions have been later obtained in \cites{DRP24,DeRosaPark2025,ELL_tocome,JLLL24}. See also \cites{CIN18} for results on bounded domains under stronger assumptions. All these results highlight a striking difference between turbulent flows in two and three space dimensions \cite{DrivasTurbulence26}, as well as between solutions of two-dimensional fluid equations obtained via compactness and the ones constructed by convex integration \cites{DaiGiriRadu2024,GR24,IsettLooi2024,X24}. We will discuss this issue more in details after stating the main theorem. With a broad viewpoint, one may recognize the following similarity in \cites{LMP21,DLP25}. There is a Banach space $X$ such that
\begin{itemize}
    \item $X$ makes the nonlinear term in \eqref{g-SQG diss} well-defined in the sense of distributions;
    \item the norm in $X$ is formally conserved in the absence of viscosity;
    \item the strong compactness in $X$ prevents its own anomalous dissipation as $\nu\rightarrow 0^+$.
\end{itemize}
Our first theorem proves that this is indeed a general pattern, from which the main results of \cites{LMP21,JLLL24,DLP25,CLLS16} follow as a direct corollary\footnote{Theorem \ref{T:main diss} does not recover \cites{DRP24,ELL_tocome} since they apply to a setting in which the strong compactness might fail.}. The space $X$ satisfying such properties for \eqref{g-SQG diss} is $\dot H^{-\alpha}(\T^2)$, which, up to a multiplicative constant, completely characterizes the nonlinearity among all active scalar with a $(1-2\alpha)$-homogeneous odd\footnote{A discussion about ``odd vs even'' multipliers can be found in \cites{IV15,S11}.} Fourier multiplier \cite{IM24}.

\begin{theorem}[Compactness \& dissipation]
    \label{T:main diss}
Let $\alpha\in(0,1]$ and $\gamma \in (0,\infty)$ be arbitrary. Let $\{\theta_0^{\nu}\}_\nu$ be a sequence of zero-average initial data and $\{f^\nu\}_\nu$ be a sequence of external forcing with zero spatial average for almost all times. Assume that 
\begin{itemize}
    \item[$(i)$] $\{\theta_0^{\nu}\}_\nu$ is strongly compact in $\dot H^{-\alpha}(\T^2)$;
    \item[$(ii)$] $\{f^{\nu}\}_\nu$ stays bounded in $L^{\frac{\alpha+\gamma}{\gamma}}([0,T];\dot H^{-\alpha}(\T^2))$ if $\alpha\leq \gamma$, while $\{f^{\nu}\}_\nu$ is bounded in $L^\infty([0,T];\dot H^{-\gamma}(\T^2)) $ if $\alpha>\gamma$.
\end{itemize}
For any sequence $\{\theta^\nu\}_{\nu}$ of the weak solutions to \eqref{g-SQG diss} constructed in Proposition \ref{P:leray exist} there holds 
    \begin{equation}
        \{\theta^\nu\}_\nu\subset L^2([0,T];\dot H^{-\alpha}(\T^2)) \text{ strongly compact} \quad \Longrightarrow \quad \lim_{\nu\rightarrow 0^+}\nu \int_0^T\|\theta^\nu(t)\|^2_{\dot H^{\gamma-\alpha}}\, dt=0.
    \end{equation}
    \end{theorem}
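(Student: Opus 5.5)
The plan is to split $\theta^\nu$ into low and high frequencies at a cutoff $N$. The strong compactness in $L^2_t\dot H^{-\alpha}$ makes the high-frequency $\dot H^{-\alpha}$ mass uniformly small. The energy inequality for the high-frequency part, with the nonlinear flux controlled by the low modes, then shows that the dissipation concentrated at high frequencies is small.

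Concretely, write $P_{\le N}$ and $P_{>N}$ for the Fourier projections. The low-frequency dissipation is trivial:
\[
\nu\int_0^T\|P_{\le N}\theta^\nu\|_{\dot H^{\gamma-\alpha}}^2\,dt\le \nu N^{2\gamma}\int_0^T\|\theta^\nu\|_{\dot H^{-\alpha}}^2\,dt\to 0
\]
for fixed $N$, since the sequence is bounded in $L^2_t\dot H^{-\alpha}$. Compactness gives $\sup_\nu\int_0^T\|P_{>N}\theta^\nu\|^2_{\dot H^{-\alpha}}\,dt=:\varepsilon_N\to0$ as $N\to\infty$. It also gives the same for the initial data by $(i)$, so $\sup_\nu\|P_{>N}\theta_0^\nu\|_{\dot H^{-\alpha}}\to 0$.

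For the high part I will test the equation against $(-\Delta)^{-\alpha}P_{>N}^2\theta^\nu$. I will use the energy inequality satisfied by the solutions of Proposition \ref{P:leray exist}, or more safely the low-frequency balance, which is an exact identity since $P_{\le N}\theta^\nu$ is smooth. The balance yields
\[
\tfrac12\|P_{\le N}\theta^\nu(T)\|^2_{\dot H^{-\alpha}}+\nu\int_0^T\|P_{\le N}\theta^\nu\|^2_{\dot H^{\gamma-\alpha}}
=\tfrac12\|P_{\le N}\theta^\nu_0\|^2_{\dot H^{-\alpha}}+\int_0^T\Pi_N+\int_0^T\langle f^\nu,P_{\le N}^2\theta^\nu\rangle_{\dot H^{-\alpha}}.
\]
I subtract this from the global energy inequality. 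Using the identity $\langle u\cdot\nabla\theta,\theta\rangle_{\dot H^{-\alpha}}=0$, the high-frequency dissipation is then bounded by
\[
\tfrac12\|P_{>N}\theta_0^\nu\|^2_{\dot H^{-\alpha}} + \Big|\int_0^T\Pi_N\Big| + \Big|\int_0^T\langle f^\nu,P_{>N}^2\theta^\nu\rangle_{\dot H^{-\alpha}}\Big|.
\]
Here $\Pi_N=-\langle \mathcal R^\perp_\alpha\theta\cdot\nabla\theta,P^2_{\le N}\theta\rangle_{\dot H^{-\alpha}}$ is the nonlinear flux through frequency $N$. Strictly speaking the global inequality is needed with forcing, which I take to be provided by Proposition \ref{P:leray exist}.

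The forcing term is handled with $(ii)$. When $\alpha\le\gamma$, I interpolate $\|P_{>N}\theta\|_{\dot H^{-\alpha}}$ between $\dot H^{-\alpha}$ and $\dot H^{\gamma-\alpha}$. The exponent $\frac{\alpha+\gamma}{\gamma}$ is designed so that Hölder with the energy bounds ($L^\infty_t\dot H^{-\alpha}$ and $\nu^{1/2}L^2_t\dot H^{\gamma-\alpha}$) gives a bound of the form $C\,\varepsilon_N^{\delta}$ plus a small absorbed fraction of the dissipation. When $\alpha>\gamma$ one pairs $\dot H^{-\gamma}$ with $\dot H^{\gamma-2\alpha}$ and uses $\gamma-2\alpha<-\alpha$, again yielding smallness in $\varepsilon_N$.

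The main obstacle is the flux $\Pi_N$, which must be shown to be uniformly small as $N\to\infty$ using only $\dot H^{-\alpha}$ control. I will exploit the commutator structure: $\Pi_N=\langle u\cdot\nabla\theta, (-\Delta)^{-\alpha}P_{\le N}^2\theta\rangle$. Using the symmetric form of the nonlinearity (the bilinear form defining weak solutions in \cref{S:weak sol}, which is bounded on $\dot H^{-\alpha}\times\dot H^{-\alpha}$ tested against smooth functions with $C^{2}$-type norms), I split $\theta=P_{\le N}\theta+P_{>N}\theta$ in each slot. The purely low-low-low interaction vanishes exactly by the antisymmetry of the Hamiltonian structure. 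Each remaining term contains at least one factor $P_{>N}\theta$, and the test function $P_{\le N}^2\theta$ brings at most $N^{c}$ growth, which must be compensated. To avoid this growth I will instead use a smooth cutoff and a Littlewood–Paley decomposition with the commutator estimate $[P_{\le N},\mathcal R^\perp_\alpha\theta\cdot\nabla]$. This yields a bound of the form $\sum_{j,k}2^{-c|j-k|}\|\Delta_j\theta\|_{\dot H^{-\alpha}}\|\Delta_k\theta\|_{\dot H^{-\alpha}}\|\Delta_{\sim N}\theta\|_{\dot H^{-\alpha}}\cdot(\text{scale factor})$, with frequencies localized near $N$. The scale factor should be $O(1)$ by the $(1-2\alpha)$-homogeneity. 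Integrating in time and using that the sequence is uniformly bounded in $L^\infty_t\dot H^{-\alpha}$ (for the triple product one needs $L^\infty$ in time for one factor) then gives $|\int\Pi_N|\lesssim \varepsilon_N^{1/2}$-type smallness.

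If the pure $\dot H^{-\alpha}$ trilinear estimate at scale $N$ fails, which is plausible for small $\alpha$, I will fall back on the higher-order bounds of Proposition \ref{P:leray exist}. These control, for example, $\nu^{\beta}\|\theta\|$ in a stronger norm, and they allow the trilinear term to be interpolated between $\dot H^{-\alpha}$ and $\dot H^{\gamma-\alpha}$. Finally I let $\nu\to0$ first and then $N\to\infty$.
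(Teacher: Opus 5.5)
\textbf{There is a genuine gap: the flux term $\int_0^T\Pi_N$ cannot be controlled the way you propose.} Your plan needs a static trilinear bound of the type $|\Pi_N(\theta)|\lesssim\|P_{\gtrsim N}\theta\|_{\dot H^{-\alpha}}\|\theta\|^2_{\dot H^{-\alpha}}$, with no growth in $N$. No such bound holds.

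First, the scale factor is not $O(1)$. Write $\theta=|\nabla|^{\alpha}g$ with $g\in L^2$. Then the form $\langle \mathcal R^\perp_\alpha\theta\cdot\nabla\theta,(-\Delta)^{-\alpha}P_{\le N}\theta\rangle$ has total order $2-\alpha$ in $g$. Triads whose three frequencies are all comparable to $N$ get no commutator gain, and Bernstein then only gives $N^{3-\alpha}\|\theta\|^3_{\dot H^{-\alpha}}$. The $(1-2\alpha)$-homogeneity is what makes the nonlinearity well defined on $\dot H^{-\alpha}$ against a \emph{fixed} smooth test function (Proposition~\ref{P:nonlin continuous}, at the cost of $\|\varphi\|_{C^2}$). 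It does not control a flux whose test function is built from $\theta$ itself.

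More decisively, suppose your estimate held for all $\theta\in\dot H^{-\alpha}$. Apply the same low-frequency balance to any inviscid weak solution $\theta\in L^\infty_t\dot H^{-\alpha}$, let $N\to\infty$, and use dominated convergence in time. This would show that \emph{every} such solution conserves the Hamiltonian. That is false: $\dot H^{-\alpha}$ lies far below the Onsager threshold, and the convex integration solutions recalled in the introduction are counterexamples. This fails already at $\alpha=1$, where $\dot H^{-1}$ vorticity means $L^2$ velocity and non-conservative $L^\infty_tL^2$ weak Euler solutions are classical, so the failure is not confined to small $\alpha$. For your viscous sequence $\int_0^T\Pi_N$ is indeed small in the limit, but only as a consequence of the theorem. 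That smallness has to come from the dynamics of these particular viscous solutions, not from a trilinear estimate.

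\textbf{Your fallback does not close the gap either.} The bound \eqref{hob} controls $\|\theta^\nu\|_{L^2(\delta,T;\dot H^\gamma)}$ only by $M^{1/2}\nu^{-\frac{\alpha+\gamma}{2\gamma}}$. Any flux bound obtained by interpolating with it therefore carries a negative power of $\nu$, and it does not vanish when $\nu\to0^+$ at fixed $N$.

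The missing idea is that the flux should not be estimated at all. The paper applies the higher-order bound directly to the high-frequency dissipation, via interpolation and H\"older in time:
\[
\nu\int_\delta^T\|\theta^\nu_{>N}\|^2_{\dot H^{\gamma-\alpha}}\,d\tau\le \Big(\int_\delta^T\|\theta^\nu_{>N}\|^2_{\dot H^{-\alpha}}\,d\tau\Big)^{\frac{\alpha}{\alpha+\gamma}}\Big(\nu^{\frac{\alpha+\gamma}{\gamma}}\int_\delta^T\|\theta^\nu\|^2_{\dot H^{\gamma}}\,d\tau\Big)^{\frac{\gamma}{\alpha+\gamma}}.
\]
The power of $\nu$ produced here is exactly the one in \eqref{hob}. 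So the right-hand side is at most $M^{\frac{\gamma}{\alpha+\gamma}}\varepsilon_N^{\frac{\alpha}{\alpha+\gamma}}$, uniformly in $\nu$.

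Since \eqref{hob} genuinely fails down to $t=0$ (Remark~\ref{R: decessity of delta positive}), this must be paired with a separate initial-layer argument, which your proposal lacks. Proposition~\ref{P:no instant dissipation} shows that $\sup_\nu\nu\int_0^\delta\|\theta^\nu\|^2_{\dot H^{\gamma-\alpha}}\to0$ as $\delta\to0^+$. It uses the energy inequality, the strong compactness of $\{\theta^\nu_0\}_\nu$, and the weak formulation tested against $|\nabla|^{-2\alpha}(\theta^\nu_0)_{\le N}$.

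Your treatment of the low frequencies and of the forcing term is fine.
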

In Proposition \ref{P:leray exist} the solutions are constructed by classical compactness methods. However, with respect to the statements that were available in the literature, the weak solutions we  construct here enjoy a suitable apriori estimate which, as described below, plays a fundamental role in our context. Furthermore, the solutions from Proposition \ref{P:leray exist} also enjoy the energy equality \eqref{dissip en bal}, as opposite to the more standard inequality, and therefore are strongly continuous in time. To the best of our knowledge,  this is new at this level or regularity of the data. We emphasize that such solutions are proved to exist for any $\theta^\nu_0\in \dot H^{-\alpha}(\T^2)$. Therefore, the range of applicability of Theorem \ref{T:main diss} is quite wide.

Without the strong compactness of the initial data it is easy to produce anomalous dissipation, even by keeping the space-time compactness of $\{\theta^\nu\}_\nu$ in $L^2([0,T];\dot H^{-\alpha}(\T^2))$. The point is that the dissipation might accumulate all the mass as $t=0$ if $(i)$ does not hold (see Remark \ref{R:initial strong compactness}). We emphasize that the space-time strong compactness of the sequence of solutions might, in principle, be ruined by the wild nonlinear dynamics in time, even under the assumption $(i)$. For $\alpha=1$ the strong compactness has been very recently proved to hold for Baire generic $\dot H^{-1}(\T^2)$ initial data \cite{G26}. It is unfortunate that, although quite foundational, this issue remains to date poorly understood in its full generality.

The sequence $\{f^\nu\}_\nu$ is only assumed to be bounded, therefore not necessarily compact in the corresponding norm, highlighting the robustness of the approach. This also shows that the recent constructions \cites{SJ24,brue2022onsager,BD23,CL21,CP25} of anomalous dissipation for the  forced three-dimensional Navier--Stokes equations cannot extend to the two-dimensional setting, for the whole family \eqref{g-SQG diss}.  The threshold $\alpha=\gamma$ naturally arises when trying to prove the higher-order bound \eqref{hob} for the solution, which is an essential step towards establishing Theorem \ref{T:main diss} (see the heuristic given in \cref{S:heuristic}, with emphasis on Remark \ref{R:main issues}). Note that such threshold has nothing to do with the one determining the criticality of \eqref{g-SQG diss}, which is instead given by $\alpha=1-\gamma$ (see \cite{LazarXue2019}). Curiously, the proof of the higher-order bound in the two regimes $\alpha\leq \gamma$ and $\alpha>\gamma$ is quite different, and none of the two applies to the other case. The case $\alpha>\gamma$ is the most technical one, since there is not enough dissipation to run the proof in the cleanest way and without going through a Gr\"onwall argument. It is because of this technical difficulty that we have required the stronger assumption on the force in the time variable when $\alpha>\gamma$. Note that $\frac{\alpha+\gamma}{\gamma}\leq 2$ if $\alpha\leq \gamma$.

 While the previous works \cites{LMP21,DLP25,JLLL24} were concerned with $\alpha=\gamma$, in our general setting there is some additional difficulty in obtaining the higher-order bounds.  In the case $\alpha=\gamma$ they somehow come more naturally from the PDE, while in the general case considered in Theorem \ref{T:main diss} they require some extra manipulations. Another important difference with respect to the previous literature \cites{LMP21,DeRosaPark2025,JLLL24} on the Navier--Stokes equations is that Leray solutions (in the sense of Definition \ref{def.viscous-sqg-sol})  to \eqref{g-SQG diss} might not be unique \cites{AC23,MS06} for general values of $\alpha$ and $\gamma$. In particular, there might be Leray solutions that do not satisfy the aforementioned higher-order bounds. It is therefore essential to work only with the ones enjoying further suitable estimates (see Proposition \ref{P:leray exist}) since otherwise the conclusion of Theorem \ref{T:main diss} might fail. See Remark \ref{R:main issues} for further discussions.

Theorem \ref{T:main diss} establishes no anomalous dissipation of the Hamiltonian in the inviscid limit conditionally to a uniform-in-viscosity ``Onsager supercritical'' assumption, i.e. below the regularity threshold $L^3([0,T];B^{\frac{2-4\alpha}{3}}_{3,\infty}(\T^2))$, at least for $\alpha\in [\frac12,1)$, above which it is possible to prove the Hamiltonian balance for $\nu=0$ (see \cite{IM24}). In fact, in Theorem \ref{T:compact and diss at frequencies} we will see that ``compactness at certain frequencies'' is already enough to rule out the dissipative anomaly, offering an even more general statement. Nonetheless, the Onsager's conjecture for inviscid active scalar is true \cites{DaiGiriRadu2024,IsettLooi2024,GR24,X24}, establishing the optimality of the Onsager's threshold. These wild solutions are obtained by convex integration and their regularity compactly embeds in $L^2([0,T];\dot H^{-\alpha}(\T^2))$. It follows that none of such solutions can be achieved by physically reasonable\footnote{For instance, the ones constructed by classical compactness methods or simply the smooth ones, whenever they exist.} weak solutions to \eqref{g-SQG diss} in the limit $\nu\rightarrow 0^+$, keeping their regularity uniformly in the viscosity. This is in sharp contrast to the  largely believed three-dimensional picture (see for instance \cite{Is22}*{Conjecture 2}).

\begin{remark}
The reason why strong compactness and anomalous dissipation for \eqref{g-SQG diss} cannot coexists is only due to the aforementioned higher-order bounds. As it is apparent from the sketch given in \cref{S:heuristic}, the structure of the nonlinearity that allows for such bounds is precisely the one that enforces infinitely many conserved quantities in the unforced ideal setting $\nu=0$. It is the very same structure that induces a ``double cascade'', the main distinguishing feature of two-dimensional turbulence \cites{Kraich67,Batch69,BCPW20}. See also \cite{CCFS08} for further differences (mainly about the ``locality'' of the energy flux) between two and three dimensions. Therefore, although it might be a bit too ambitious to state a precise question, it is natural to wonder to what extent the presence of an inverse cascade is the ultimate reason behind Theorem \ref{T:main diss}. For instance, the so-called ``Taylor's conjecture'' is true \cites{FL20,FLMV22}, providing unconditional magnetic helicity conservation for all solutions of the ideal MHD equations obtained in the zero viscosity/resistivity limit, going beyond the corresponding Onsager-type threshold \cites{KL07,FLS24}. Coherently with the above reasoning, the MHD system, although three-dimensional, experiences an inverse cascade\footnote{The first named author got informed about that in a conversation with Gregory Eyink and Theodore D. Drivas during an ``Eyink's Online Turbulence Seminar''.} in the magnetic helicity \cite{AMP06}. 
\end{remark}

In view of Theorem \ref{T:main diss}, it is natural to investigate the largest class of initial data which guarantees the strong compactness of $\{\theta^\nu\}_\nu$ in $L^2_{\rm loc}([0,\infty);\dot H^{-\alpha}(\T^2))$. In addition to provide an effective setting in which no anomalous dissipation can occur, it would also provide a global existence result for weak solutions of the inviscid system. Indeed, the nonlinearity is a continuous bilinear operator on $L^2_{\rm loc}([0,\infty);\dot H^{-\alpha}(\T^2))$ (see Remark \ref{R:nonlinearity continuous}). Our second main result addresses precisely this issue. We denote by $C^0_{\rm w}([0,\infty);L^p(\T^2))$ the space of continuous-in-time functions with values in $L^p(\T^2)$ endowed with the weak topology.

\begin{theorem}[Global existence]\label{T:global-existence}
Let $\alpha\in (0,1)$ be arbitrary and set $p_\alpha:=\frac{2}{1+\alpha}$. Let $\theta_0\in L^{p_\alpha} (\T^2)$ be a zero-average initial datum and $f\in L^1_{\rm loc}([0,\infty);L^{p_\alpha} (\T^2))\cap L^{1+\alpha}_{\rm loc}([0,\infty);\dot{H}^{-\alpha}(\T^2))$ be with zero spatial average for almost all times. There exists a global-in-time weak solution 
$$
\theta\in C^0([0,\infty);\dot{H}^{-\alpha}(\T^2))\cap C^0_{\rm w}([0,\infty);L^{p_\alpha}(\T^2)) 
$$
to 
\begin{equation}\label{g-SQG}\tag{gSQG}
  \left\{\begin{array}{l}
\partial_t\theta + u\cdot\nabla \theta  = f\\ 
u = \mathcal R^\perp_\alpha \theta\\
\theta(\cdot,0)=\theta_0
\end{array}\right. 
\end{equation} 
in the sense of Definition \ref{def.sqg-solutions} such that $\int_{\T^2}\theta(x,t)\,dx=0$, 
\begin{align}\label{hamiltonian balance}
\|\theta (t)\|_{\dot{H}^{-\alpha}}^2= \|\theta_0\|^2_{\dot{H}^{-\alpha}} + 2\int_0^t  \langle f(\tau),\theta(\tau)\rangle_{\dot H^{-\alpha}}\,d\tau 
\end{align}
and 
\begin{equation}\label{Lp inequality}
\|\theta (t)\|_{L^{p_\alpha}}\leq \|\theta_0\|_{L^{p_\alpha}} +\int_0^t \|f(\tau)\|_{L^{p_\alpha}}\,d\tau,
\end{equation}
for all times $t\geq 0$.
\end{theorem}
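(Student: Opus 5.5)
We will construct $\theta$ as an inviscid limit of the Leray solutions of Proposition \ref{P:leray exist}. Concretely, we fix $\gamma=1$, mollify the data $\theta^\nu_0:=\theta_0*\rho_\nu$ and $f^\nu:=f*\rho_\nu$, and let $\theta^\nu$ be the corresponding solutions of \eqref{g-SQG diss}. Since $\alpha<1=\gamma$ and the data are smooth, these solutions are global and regular, which justifies all the manipulations below.

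\textbf{Uniform bounds.} The $L^{p_\alpha}$ norm is transported by the divergence-free field $u^\nu$. The fractional Laplacian is $L^p$-accretive by the C\'ordoba--C\'ordoba positivity. Together these give
\[
\|\theta^\nu(t)\|_{L^{p_\alpha}}\le \|\theta_0\|_{L^{p_\alpha}}+\int_0^t\|f(\tau)\|_{L^{p_\alpha}}\,d\tau .
\]
The Sobolev embedding $L^{p_\alpha}\hookrightarrow \dot H^{-\alpha}$ holds precisely because $\frac1{p_\alpha}-\frac12=\frac\alpha2$. Hence $\{\theta^\nu_0\}_\nu$ is strongly compact in $\dot H^{-\alpha}$: it converges there to $\theta_0$. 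The bound also gives $\theta^\nu$ bounded in $L^\infty_{\rm loc}\dot H^{-\alpha}$. The hypothesis $f\in L^{1+\alpha}\dot H^{-\alpha}$ is exactly assumption $(ii)$ of Theorem \ref{T:main diss} with $\gamma=1$, since $\frac{\alpha+\gamma}{\gamma}=1+\alpha$. The time derivative $\partial_t\theta^\nu$ is bounded in $L^1_{\rm loc}H^{-s}$ for large $s$, using the weak formulation of the nonlinearity on $\dot H^{-\alpha}$ (Remark \ref{R:nonlinearity continuous}).

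\textbf{Strong compactness.} This is the main obstacle. The embedding $L^{p_\alpha}\hookrightarrow\dot H^{-\alpha}$ is critical, hence not compact, so the $L^{p_\alpha}$ bound alone does not give strong compactness in $L^2_{\rm loc}\dot H^{-\alpha}$. The plan is to prove a uniform equi-integrability statement in time. Take $\theta^\nu_0$ (and $f$) and split each into $a+b$, with $a$ bounded in $L^2$ and $b$ small in $L^{p_\alpha}$; the split is uniform in $\nu$ because $\{|\theta_0|^{p_\alpha}\}$ is equi-integrable, being a single function. Since the equation is transport plus a positive operator, the solution of a linear transport--diffusion with the frozen velocity $u^\nu$ splits accordingly: the $b$-part stays small in $L^{p_\alpha}$, hence small in $\dot H^{-\alpha}$, and the $a$-part stays bounded in $L^2$. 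Note that $L^2$ embeds compactly into $\dot H^{-\alpha}$. So for every $\varepsilon$ the family $\{\theta^\nu(t)\}$ lies within $\varepsilon$ in $\dot H^{-\alpha}$ of a bounded set of $L^2$. Combined with the time-derivative bound and an Aubin--Lions type argument (totally boundedness up to $\varepsilon$), this yields strong compactness in $C^0_{\rm loc}\dot H^{-\alpha}$, or at least in $L^2_{\rm loc}\dot H^{-\alpha}$.

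\textbf{Passage to the limit.} Extract a subsequence with $\theta^\nu\to\theta$ strongly in $L^2_{\rm loc}\dot H^{-\alpha}$ and weakly-$*$ in $L^\infty_{\rm loc}L^{p_\alpha}$. The nonlinear term converges by Remark \ref{R:nonlinearity continuous}, and the viscous term vanishes in the sense of distributions, so $\theta$ is a weak solution in the sense of Definition \ref{def.sqg-solutions}. Theorem \ref{T:main diss} gives $\nu\int_0^T\|\theta^\nu\|^2_{\dot H^{1-\alpha}}\to0$. We then pass to the limit in the energy equality \eqref{dissip en bal}, integrated in time, for a.e.\ $t$. The forcing term converges because $f^\nu\to f$ strongly in $L^{1+\alpha}\dot H^{-\alpha}$ while $\theta^\nu\to\theta$ strongly in $L^{(1+\alpha)'}\dot H^{-\alpha}$, the latter by interpolating the $L^2$ convergence with the $L^\infty$ bound. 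This gives \eqref{hamiltonian balance} for a.e.\ $t$.

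\textbf{Continuity and the $L^{p_\alpha}$ bound.} Weak continuity in $\dot H^{-\alpha}$ follows from the equation. Together with the continuity of the norm given by \eqref{hamiltonian balance}, it yields $\theta\in C^0\dot H^{-\alpha}$ and the balance for all $t$. Next, the uniform $L^{p_\alpha}$ bound and weak continuity give $\theta\in C^0_{\rm w}L^{p_\alpha}$. Finally, \eqref{Lp inequality} follows by lower semicontinuity of the norm under weak convergence.
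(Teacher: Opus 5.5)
Your proposal follows the paper's proof in all essentials. It has the same ingredients: viscous regularization with $\gamma=1$ and mollified data, the uniform $L^{p_\alpha}$ bound, and the splitting into a part small in $L^{p_\alpha}$ and a part bounded in $L^2$, obtained by solving the linear transport--diffusion problem with the frozen velocity $u^\nu$. It also passes to the limit in the nonlinearity via Remark~\ref{R:nonlinearity continuous} and uses Theorem~\ref{T:main diss} to remove the dissipation.

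The one real difference is how the splitting becomes compactness. You use it to show that the range $\{\theta^\nu(t)\}_{\nu,\,t\in[0,T]}$ is totally bounded in $\dot H^{-\alpha}$, and then run an Aubin--Lions--Simon argument. The paper instead proves equicontinuity of $t\mapsto\int\theta^\nu\psi$ and applies Ascoli--Arzel\`a in the weak topology of $L^{p_\alpha}$, which gives $\theta^\nu(t)\rightharpoonup\theta(t)$ for \emph{every} $t$. It then upgrades this to strong $\dot H^{-\alpha}$ convergence at every $t$ via Lemma~\ref{InterpolatoryLem}, which is your total boundedness phrased as uniformly small high-frequency tails. The advantage of the paper's packaging is that the energy identity passes to the limit at every time directly.

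Two points in your version need tightening.

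\emph{The time-derivative bound.} A bound on $\partial_t\theta^\nu$ in $L^1_{\rm loc}H^{-s}$ only gives compactness in $L^p_{\rm loc}\dot H^{-\alpha}$ for $p<\infty$. To reach $C^0_{\rm loc}\dot H^{-\alpha}$ you need $\|\partial_t\theta^\nu\|_{H^{-s}}$ to be equi-integrable in time. This does hold here: the nonlinear and viscous terms are bounded in $L^\infty_tH^{-s}$, and $f^\nu\to f$ in $L^1_{\rm loc}L^{p_\alpha}$. That is exactly the paper's equicontinuity estimate in Step~3.

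\emph{The balance for all $t$.} If you settle for $L^2_{\rm loc}$ compactness, you obtain \eqref{hamiltonian balance} only for a.e.\ $t$. Appealing to ``the continuity of the norm given by the balance'' is then circular, since a weakly continuous path can lose norm on a null set of times. The fix is already in your argument. The closure $K$ of the range is compact in $\dot H^{-\alpha}$, hence weakly closed, so the weakly continuous representative satisfies $\theta(t)\in K$ for every $t$. On $K$ the weak and strong topologies coincide, so $\theta\in C^0\dot H^{-\alpha}$. The balance then extends to all $t$ by continuity of both sides.

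A minor correction: global smoothness for $\gamma=1$ comes from $\gamma\ge 1-\alpha$, not from $\alpha<\gamma$. The condition $\alpha\le\gamma$ is what places you in the $L^{\frac{\alpha+\gamma}{\gamma}}$ case of assumption $(ii)$.
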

The solutions are obtained by vanishing viscosity. The main point is to establish the strong compactness of $\{\theta^\nu\}_\nu$ in $L^2_{\rm loc}([0,\infty);\dot{H}^{-\alpha}(\T^2))$, which is needed to pass to the limit in the nonlinear term. Note that the integrability exponent $p_\alpha$ is critical in the sense that the embedding $L^{p_\alpha}(\T^2)\subset \dot{H}^{-\alpha}(\T^2)$ is only continuous. Therefore, the spatial strong compactness in $\dot{H}^{-\alpha}(\T^2)$ cannot be deduced by the compactness of the Sobolev embedding. We overcome this issue by providing a splitting of the solution into an ``arbitrarily small'' and a ``regular'' part. The regular part will be used to kill concentrations into high-frequencies, from which the desired strong compactness is deduced thanks to the arbitrariness in the size of the small part (see Lemma \ref{InterpolatoryLem}). The conservation of the Hamiltonian is then a direct consequence of Theorem \ref{T:main diss} since no dissipation can occur whenever the sequence $\{\theta^\nu\}_\nu$ is strongly compact in $L^2_{\rm loc}([0,\infty);\dot{H}^{-\alpha}(\T^2))$. The inequality \eqref{Lp inequality} comes by the lower semicontinuity of the norm under weak convergence. The assumption on the force $f\in L^{1+\alpha}_{\rm loc}([0,\infty);\dot{H}^{-\alpha}(\T^2))$ can be most likely removed (see Remark \ref{R:no L2 force}). As well as for \eqref{g-SQG diss}, weak solutions to \eqref{g-SQG}  are known to be nonunique \cites{DaiGiriRadu2024,CastroFaracoMengualSolera2025,V1,V2,ABCDGJH,IsettLooi2024,BSV19}.

Theorem \ref{T:global-existence} recovers \cite{DLP25}*{Theorem 1.1} obtained for $\alpha=\frac12$. In addition, besides the $\dot H^{-1}(\T^2)$ Baire generic global existence recently proved in \cite{G26} for $\alpha=1$, it offers the largest class known so far of initial data for which global existence for \eqref{g-SQG} holds. When $\alpha\rightarrow 1^-$ we have $p_\alpha\rightarrow 1^+$, sharply matching the well-known one for the incompressible Euler equations. For $\alpha=1$, global existence is known to hold for measure initial data with singular part of distinguished sign since the work of Delort \cite{delort1991existence} (see also \cites{diperna1987concentrations,DM87,DM88} for earlier results and \cites{evans1994hardy,scho95,vecchi19931,majda1993remarks} for further extensions). In this case, the lack of the continuous embedding $L^{1}(\T^2)\subset \dot{H}^{-1}(\T^2)$ makes the proof substantially more delicate: only some specific quantities pass to limit, and they do it so only in a distributional sense since the strong compactness might, in principle, fail. Therefore, the endpoint $\alpha=1$ requires a quite different argument and, since it is anyway already known, it has been excluded from the statement of Theorem \ref{T:global-existence}.

\subsection*{Organization of the paper} In \cref{S:comp and diss} we will prove Theorem \ref{T:main diss} together its frequency-dependent generalization from Theorem \ref{T:compact and diss at frequencies}. \cref{S:global existence} will be instead dedicated to the proof of Theorem \ref{T:global-existence}.

\section{Compactness vs dissipation}\label{S:comp and diss}
\subsection{Notation}\label{S:notation} Let $g:\T^2\rightarrow \R$. Denoting by $\hat{g}(n)$ its Fourier coefficients we have
$$
g(x)=\sum_{n \in \mathbb{Z}^2} \hat{g}(n) e^{i n\cdot x}.
$$
For simplicity we will only consider functions with zero average. Therefore $\hat g(0)=0$ and we can sum over $n\in \mathbb{Z}^2\setminus\{0\}$ only. 

For any $s\in \R$ we define the homogeneous Sobolev norm by
$$
\|g\|_{\dot{H}^s}^2 := \sum_{n \in \mathbb{Z}^2\setminus\{0\}} |n|^{2s}|\hat{g}(n)|^2,
$$
which is induced by the inner product 
\begin{equation}
    \label{inner prod H - 12}
    \langle g , h \rangle_{\dot H^{s}}:=\sum_{n \in \mathbb{Z}^2\setminus\{0\}} |n|^{2s} \hat{g}(n) \hat{h}(n).
\end{equation}
For any $\alpha \in \R$ and any zero average function $g:\T^2\rightarrow \R$, the fractional Laplacian $(-\Delta)^\alpha g$ is defined as 
$$
(-\Delta)^\alpha g (x)=|\nabla |^{2\alpha }g(x):= \sum_{n \in \mathbb{Z}^2\setminus\{0\}}|n|^{2\alpha} \hat{g}(n) e^{i n\cdot x}.
$$
It follows
$$
\|g\|_{\dot{H}^s}=\|(-\Delta)^{\frac{s}{2}}g\|_{L^2}= \| |\nabla |^{s } g\|_{L^2} \qquad \forall s\in \R.
$$
In particular, if $\langle \cdot,\cdot\rangle$ denotes the standard duality paring, we have 
\begin{equation}
    \label{duality and inner product}
    \langle g,h\rangle_{\dot H^s} =  \langle |\nabla|^s g, |\nabla|^s h\rangle = \langle |\nabla|^{2s} g,  h\rangle = \langle  g,  |\nabla|^{2s} h\rangle\qquad \forall s\in \R.
\end{equation}

\subsection{Main idea behind Theorem \ref{T:main diss}}\label{S:heuristic} The proof of the theorem depends on three main, rather independent, ingredients
\begin{itemize}
    \item[(1)] higher-order bound;
    \item[(2)] dissipation splitting for positive times;
    \item[(3)] no instantaneous dissipation.
\end{itemize}
We now give a sketch of each step. To make it as light as possible,  we will assume that everything is smooth enough to run the computations and that there is no external force, i.e. we assume  $f^\nu = 0$. We postpone to Remark \ref{R:main issues} a more careful discussion about the technicalities which, in view of possible finite-time singularities \cites{K10} and nonuniqueness \cites{AC23,MS06} for \eqref{g-SQG diss}, play in fact an important role in applying the argument to suitably defined sequences of Leray solutions. 

\subsubsection*{$(1)$ Higher-order bound} Under the assumption that $\{\theta^\nu_0\}_\nu\subset \dot H^{-\alpha}(\T^2)$ is bounded,
here the goal is to prove that, for any $\delta>0$, there exists an implicit constant such that 
\begin{equation}
    \label{higher order bound idea}
     \nu^{\frac{\alpha+\gamma}{\gamma}} \int^T_\delta \|\theta^\nu(t)\|_{\dot H^{\gamma}}^2 \, dt  \lesssim 1\qquad \forall \nu>0.
\end{equation}

To prove \eqref{higher order bound idea}, we start with the balance
 \begin{equation}\label{entrophy balance idea}
     \frac{d}{dt} \| \theta^\nu(t) \|_{L^2}^2 = -2\nu \| \theta^\nu(t) \|^2_{\dot H^{\gamma}}.
 \end{equation}
 By the interpolation inequality\footnote{It is here that it is convenient to work with zero-average solutions, since otherwise it is necessary to add a lower-order term to make the interpolation inequality true.} $\| \theta^\nu(t) \|_{L^2}\lesssim \| \theta^\nu(t) \|_{\dot H^{-\alpha}}^{\frac{\gamma}{\alpha+\gamma}}\| \theta^\nu(t) \|^{\frac{\alpha}{\alpha+\gamma}}_{\dot H^{\gamma}}$ , and since  $\| \theta^\nu (t) \| _{\dot H^{-\alpha}}\leq \| \theta^\nu_0 \| _{\dot H^{-\alpha}}$ by \eqref{dissip en bal} (recall we are considering $f^\nu=0$),  we obtain
 \begin{equation}
     \frac{d}{dt} \| \theta^\nu(t) \|_{L^2}^2 \lesssim -\nu \| \theta^\nu(t) \|^{2\frac{\alpha + \gamma}{\alpha}}_{L^2}.
 \end{equation}
The integration of the differential inequality yields to $ \| \theta^\nu(t) \|_{L^2}^2 \lesssim (\nu t)^{-\frac{\alpha}{\gamma}}$. Therefore, by integrating \eqref{entrophy balance idea} in time, we conclude
\begin{equation}
     \nu\int^T_\delta \| \theta^\nu(t)\|_{\dot H^{\gamma}}^2 \, dt \leq \| \theta^\nu(\delta) \|^2_{L^2}\lesssim (\nu \delta)^{-\frac{\alpha}{\gamma}},
\end{equation}
and the higher-order bound \eqref{higher order bound idea} is proved. 

\subsubsection*{$(2)$ Dissipation splitting for positive times} We will split the dissipation into high and low frequencies. To do that,  define the Fourier projections of a periodic function $g$ as
\begin{equation}\label{freq cuts}
    g_{ \leq N}(x) \vcentcolon=\sum_{\substack{n \in \mathbb{Z}^2 \\ |n| \leq  N}} \hat g(n) e^{in \cdot x}\qquad \text{and} \qquad g_{>N}:=g-g_{\leq N}.
\end{equation}
The goal is to prove, for any $\delta>0$, that 
\begin{equation}
    \label{diss splitting bound idea}
     \nu\int^T_\delta \| \theta^\nu(t)\|_{\dot H^{\gamma-\alpha}}^2 \, dt\lesssim \nu N^{2\gamma} + \Phi^{\frac{\alpha}{\alpha+\gamma}} (N)\qquad \forall \nu,N>0,
\end{equation}
with
$$
\Phi (N):=  \sup_{\nu>0}\int^T_\delta \| \theta^\nu_{>N}(t)\|_{\dot H^{-\alpha}}^2 \, dt.
$$
Note that $\Phi(N)\rightarrow 0$ as $N\rightarrow \infty$ if $\{\theta^\nu\}_\nu\subset L^2([0,T];\dot H^{-\alpha}(\T^2))$ is strongly compact. Therefore, under the latter compactness assumption, one can first let $\nu\rightarrow 0^+$ in \eqref{diss splitting bound idea}, and then\footnote{In fact, from \eqref{diss splitting bound idea} it is clear that only frequencies $N\sim \nu^{-\frac{1}{2\gamma}}$ matter. This leads to the frequency-dependent compactness statement of Theorem \ref{T:compact and diss at frequencies}.} $N\rightarrow \infty$, to deduce that there is no anomalous dissipation for all times away from $t=0$, i.e.
\begin{equation}
    \label{no diss for positive times idea}
      \lim_{\nu\rightarrow 0^+}\nu\int^T_\delta \| \theta^\nu(t)\|_{\dot H^{\gamma-\alpha}}^2 \, dt=0 \qquad \forall \delta>0.
\end{equation}
To prove \eqref{diss splitting bound idea} we split
\begin{equation}
    \| \theta^\nu(t) \|^2_{\dot H^{\gamma - \alpha}} =  \| \theta^\nu_{\leq N } (t)\|^2_{\dot H^{\gamma - \alpha}} + \| \theta^\nu_{>N} (t)\|^2_{\dot H^{\gamma - \alpha}}.
\end{equation} 
We bound the low-frequency part with the  Hamiltonian at the initial time as 
$$
\| \theta^\nu_{\leq N } (t)\|^2_{\dot H^{\gamma - \alpha}}\leq N^{2\gamma}\| \theta^\nu (t)\|^2_{\dot H^{- \alpha}}\leq N^{2\gamma} \| \theta^\nu_{0}\|^2_{\dot H^{- \alpha}}\lesssim N^{2\gamma},
$$
while for the high-frequency part we use  interpolation 
$$
\| \theta^\nu_{>N} (t)\|_{\dot H^{\gamma - \alpha}}\lesssim \| \theta^\nu_{>N} (t)\|^{\frac{\alpha}{\alpha+\gamma}}_{\dot H^{- \alpha}}  \| \theta^\nu (t)\|^{\frac{\gamma}{\alpha+\gamma}}_{\dot H^{ \gamma}}.
$$
By plugging these two last estimates into \eqref{diss splitting bound idea}, integrating in time, and using the H\"older inequality on the time integral, we obtain
\begin{align}
     \nu \int^T_\delta \| \theta^\nu (t) \|_{\dot H^{\gamma - \alpha}}^{2} \, dt &\lesssim \nu N^{2\gamma} + \nu\int_\delta^T\| \theta^\nu_{>N} (t)\|^{2\frac{\alpha}{\alpha+\gamma}}_{\dot H^{- \alpha}}  \| \theta^\nu (t)\|^{2\frac{\gamma}{\alpha+\gamma}}_{\dot H^{ \gamma}}\,dt\\
     &\lesssim \nu N^{2\gamma} + \left(\int_\delta^T\| \theta^\nu_{>N} (t)\|^{2}_{\dot H^{- \alpha}} \,dt\right)^\frac{\alpha}{\alpha+\gamma} \left(\nu^{\frac{\alpha+\gamma}{\gamma}}\int_\delta^T\| \theta^\nu (t)\|^{2}_{\dot H^{ \gamma}} \,dt\right)^\frac{\gamma}{\alpha+\gamma}\\
     &\lesssim \nu N^{2\gamma} + \Phi^{\frac{\alpha}{\alpha+\gamma}}(N),
\end{align}
where the last inequality follows by the higher-order bound \eqref{higher order bound idea}. We have proved \eqref{diss splitting bound idea}.

\subsubsection*{$(3)$ No instantaneous dissipation} The goal of this last step is to prove that 
\begin{equation}
    \label{no instant dissipation idea}
    \{\theta_0^\nu\}_\nu\subset \dot H^{-\alpha}(\T^2) \text{ strongly compact} \quad \Longrightarrow \quad \lim_{\delta\rightarrow 0^+} \sup_{\nu>0} \nu\int_0^\delta \| \theta^\nu (t) \|_{\dot H^{\gamma - \alpha}}^{2} \, dt =0,
\end{equation}
i.e. the dissipation of the Hamiltonian cannot happen instantaneously in time. The validity of \eqref{no instant dissipation idea} is rather general and it does not even require that much structure on the nonlinearity. To prove \eqref{no instant dissipation idea} one should just make an appropriate choice of the test function in the weak formulation (see the proof of Proposition \ref{P:no instant dissipation}). 

Note that, once \eqref{no instant dissipation idea} is established, one can split, for any $\delta>0$, the whole dissipation in $(0,T)$ into times $(0,\delta)$ and $(\delta,T)$. By first sending $\nu\rightarrow 0^+$, the dissipation in $(\delta,T)$ will vanish by \eqref{no diss for positive times idea}, while taking the subsequent limit $\delta\rightarrow 0^+$ shows that the dissipation in $(0,\delta)$ vanishes by \eqref{no instant dissipation idea}. We can therefore schematically summarize the three steps above as
\begin{align}
\text{Structure of the nonlinearity} \qquad &\Longrightarrow \qquad  \text{higher-order bound \eqref{higher order bound idea}}\\
    \text{higher-order bound \eqref{higher order bound idea}} \qquad &\Longrightarrow \qquad \text{no dissipation in } (\delta,T)\\
     \{\theta_0^\nu\}_\nu\subset \dot H^{-\alpha}(\T^2) \text{ strongly compact} \qquad &\Longrightarrow \qquad \text{dissipation arbitrarily small in } (0,\delta).
\end{align}
In Remark \ref{R: decessity of delta positive} we will see that the higher-order bound \eqref{higher order bound idea} does not generally hold for $\delta=0$. It is then necessary to treat the dissipation in $(0,\delta)$ and in $(\delta,T)$ separately. In other words, the steps $(2)$ and $(3)$ above cannot be merged in a single one.

\begin{remark}
    \label{R:main issues}
Although it certainly contains all the main ideas, the argument we have given above is only formal because of two main issues. First of all, since we are working with general values of $\alpha$ and $\gamma$, there might not be enough regularity to perform the computations, even if $\nu>0$. It is certainly not possible to assume the solution to be smooth since \eqref{g-SQG diss} might develop singularities in finite time. Therefore, to avoid the case in which the theorem applies to an empty set of solutions, it is necessary to work with weak solutions à la Leray (see Definition \ref{def.viscous-sqg-sol}), which always exist globally in time by classical compactness methods. It is here that a second issue arises. Due to the limited regularity of the  data, Leray solutions might not be unique, even if they satisfy the energy identity \eqref{dissip en bal}. It is then necessary to prove (see Proposition \ref{P:leray exist}) the existence of Leray solutions that satisfy \eqref{higher order bound idea} as an apriori bound, and work with them only. Indeed, as it should be clear from the above three steps, the argument applies once the higher-order bound holds. We  believe that, in the appropriate range of $\alpha$ and $\gamma$, there are Leray solutions in the sense of Definition \ref{def.viscous-sqg-sol} that do not satisfy the higher-order bound \eqref{higher order bound idea} and for which the conclusion of Theorem \ref{T:main diss} fails.
\end{remark}
The goal of the next sections is to make the whole argument rigorous.

\subsection{Weak solutions}\label{S:weak sol}

We want to define weak solutions to \eqref{g-SQG diss} and \eqref{g-SQG}. To do that, we start with the following formal manipulations
\begin{align}
 \int_{\mathbb{T}^2} \theta u\cdot \nabla \varphi \,dx  &= \int_{\mathbb{T}^2}  \mathcal{R}^\perp_\alpha\theta \cdot \nabla \varphi (-\Delta)^{\alpha}(-\Delta)^{-\alpha}\theta\, dx \\
 & = \int_{\mathbb{T}^2} \mathcal{R}^\perp_\alpha \theta \cdot [\nabla \varphi , (-\Delta)^{\alpha}](-\Delta)^{-\alpha}\theta \,dx +  \int_{\mathbb{T}^2} \mathcal{R}^\perp_\alpha \theta \cdot (-\Delta)^{\alpha} \left( \nabla \varphi (-\Delta)^{-\alpha}\theta \right) dx,
 \end{align}
 where $[\cdot,\cdot]$ is the usual commutator symbol. For convenience, we will denote it by 
 \begin{equation}
     \label{Talpha}
     T_\alpha [\varphi] := [\nabla \varphi , (-\Delta)^{\alpha}].
 \end{equation}
 We can perform further integrations by parts to rewrite the last term as
 \begin{align}
 \int_{\mathbb{T}^2} \mathcal{R}^\perp_\alpha \theta \cdot (-\Delta)^{\alpha} \left( \nabla \varphi (-\Delta)^{-\alpha}\theta \right) \,dx &= \int_{\mathbb{T}^2}(-\Delta)^{-\alpha}\theta \nabla \varphi \cdot \nabla^\perp \theta \,dx\\
 & = -\int_{\mathbb{T}^2} \theta \nabla^\perp (-\Delta)^{-\alpha}\theta \cdot \nabla \varphi  \,dx\\
  & = - \int_{\mathbb{T}^2} \theta u \cdot \nabla \varphi  \,dx.
 \end{align} 
 Thus, we arrive at
 \begin{align}
 \int_{\mathbb{T}^2} \theta u\cdot\nabla \varphi\, dx &= \frac{1}{2} \int_{\mathbb{T}^2} \mathcal{R}^\perp_\alpha \theta \cdot  T_\alpha [\varphi](-\Delta)^{-\alpha}\theta\, dx\\
 &=:\frac12 \left\langle \mathcal R^\perp_\alpha \theta , T_\alpha [\varphi](-\Delta)^{-\alpha}\theta \right\rangle.\label{reassembling nonlinearity}
 \end{align}
For any $\varphi\in C^\infty(\T^2)$, the linear operator $ T_\alpha [\varphi]:\dot H^s(\T^2)\rightarrow \dot H^{s+1-2\alpha}(\T^2)$ is continuous. In particular
$$
\| T_\alpha [\varphi] (-\Delta)^{-\alpha}\theta\|_{\dot H^{1-\alpha}} \leq C \|(-\Delta)^{-\alpha}\theta\|_{\dot H^{\alpha}}\leq C \|\theta\|_{\dot H^{-\alpha}}.
$$
Since also $\| R^\perp_\alpha \theta\|_{\dot H^{\alpha-1}}\leq C \|\theta\|_{\dot H^{-\alpha}}$, the coupling in \eqref{reassembling nonlinearity} is well-defined. We summarize these considerations in the following proposition which can be found in \cite{IM24}*{Theorem 4}.

\begin{proposition}[Continuity of nonlinearity]\label{P:nonlin continuous}
For any $\alpha\in (0,1]$ there exits a constant $C<\infty$ such that 
$$
\left|\left\langle \mathcal R^\perp_\alpha \theta , T_\alpha [\varphi](-\Delta)^{-\alpha}\theta \right\rangle\right|\leq C \|\varphi\|_{C^2} \|\theta\|^2_{\dot H^{-\alpha}},
$$
for all $\varphi\in C^\infty(\T^2)$ and all $\theta \in \dot H^{-\alpha}(\T^2)$. 
\end{proposition}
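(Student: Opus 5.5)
We reduce the estimate to a commutator bound for $T_\alpha[\varphi]=[\nabla\varphi,(-\Delta)^\alpha]$ that only uses one derivative of the coefficient $a:=\nabla\varphi$. Set $w:=(-\Delta)^{-\alpha}\theta$. Then $\|w\|_{\dot H^{\alpha}}=\|\theta\|_{\dot H^{-\alpha}}$ and $\mathcal R^\perp_\alpha\theta=\nabla^\perp w$, so $\|\nabla^\perp w\|_{\dot H^{\alpha-1}}=\|\theta\|_{\dot H^{-\alpha}}$. By duality between $\dot H^{\alpha-1}$ and $\dot H^{1-\alpha}$ it suffices to prove
\begin{equation}
\big\|[a,(-\Delta)^\alpha]h\big\|_{\dot H^{1-\alpha}}\le C\,\|\nabla a\|_{L^\infty}\,\|h\|_{\dot H^{\alpha}}
\end{equation}
for every smooth periodic $a$ and zero-average $h$. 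Indeed, this gives the claim with $\|\nabla a\|_{L^\infty}\le\|\varphi\|_{C^2}$, since zero modes do not contribute after the outer derivative. The case $\alpha=1$ is explicit: $[a,-\Delta]h=2\nabla a\cdot\nabla h+(\Delta a)h$, and after pairing with $\nabla^\perp w$ the term $(\Delta a)h$ is integrated by parts back onto $\nabla a$. So the real work is $\alpha\in(0,1)$, where the target is a Calderón-type commutator estimate of order $2\alpha-1$ at cost $\|\nabla a\|_{L^\infty}$. The point is that the constant should see no derivatives of $\varphi$ beyond the second, and only in $L^\infty$, with no Fourier-summability of $\varphi$.

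To prove it I would use a Bony paraproduct decomposition with Littlewood--Paley blocks $\Delta_j$ on $\T^2$. Write $ah=T_ah+T_ha+R(a,h)$, and split the commutator accordingly:
\begin{equation}
[a,(-\Delta)^\alpha]h=\big(T_a(-\Delta)^\alpha h-(-\Delta)^\alpha T_ah\big)+\big(T_{(-\Delta)^\alpha h}a+R(a,(-\Delta)^\alpha h)\big)-(-\Delta)^\alpha\big(T_ha+R(a,h)\big).
\end{equation}

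\emph{Low--high piece.} In the first bracket each block is $[S_{j-1}a,(-\Delta)^\alpha]\Delta_jh$, whose output is localized at frequency $\sim2^j$. I would write the commutator via the mean value theorem on the symbol: for $|\eta|\ll|\xi|\sim2^j$ one has $|\xi+\eta|^{2\alpha}-|\xi|^{2\alpha}=\eta\cdot\int_0^1\nabla(|\cdot|^{2\alpha})(\xi+s\eta)\,ds$. The $\eta$ factor lands on $S_{j-1}a$ as a gradient, and the remaining multiplier is a smooth symbol of order $2\alpha-1$ on the annulus. Its kernel is uniformly integrable after rescaling, and this gives
\begin{equation}
\big\|[S_{j-1}a,(-\Delta)^\alpha]\Delta_jh\big\|_{L^2}\lesssim 2^{j(2\alpha-1)}\|\nabla a\|_{L^\infty}\|\Delta_jh\|_{L^2}.
\end{equation}
Almost orthogonality in $j$ then yields the $\dot H^{1-\alpha}$ bound from $\|h\|_{\dot H^\alpha}$.

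\emph{High--low and resonant pieces.} Here $a$ carries the highest frequency, and I would use $\|\Delta_ka\|_{L^\infty}\lesssim2^{-k}\|\nabla a\|_{L^\infty}$. For $T_{(-\Delta)^\alpha h}a$, the output at frequency $2^k$ has size $\lesssim 2^{-k}\|\nabla a\|_\infty\|S_{k-1}(-\Delta)^\alpha h\|_{L^2}$. Bernstein gives $\|S_{k-1}(-\Delta)^\alpha h\|_{L^2}\lesssim\sum_{j<k}2^{j\alpha}\|\Delta_j|\nabla|^{\alpha}h\|_{L^2}$, and the weights combine to $2^{k(1-\alpha)}2^{-k}2^{j\alpha}$, a geometric factor $2^{-(k-j)\alpha}$. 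Schur's test in $(k,j)$ then closes the estimate, and $T_ha$ is treated the same way. The resonant terms $R(a,h)$ and $R(a,(-\Delta)^\alpha h)$ are handled by the standard summation bound for blocks with comparable frequencies. Their output index is $\le k$, and the total regularity $(1-\alpha)+(2\alpha-1)-\alpha=0$ combined with the $2^{-k}$ gain from $a$ gives a summable series.

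I expect the main obstacle to be the low--high commutator piece. It requires the uniform-in-$j$ $L^1$ bound on the kernel of the order-$(2\alpha-1)$ difference symbol, with the constant expressed through $\|\nabla S_{j-1}a\|_{L^\infty}\le\|\nabla a\|_{L^\infty}$. On $\T^2$ this also requires transferring the whole-space kernel estimate through periodization, and I would do this by Poisson summation, using the rapid decay of the rescaled kernel. A further delicate point is the endpoint summations in the resonant terms, where the exponents add exactly to zero. There I would avoid loss by exploiting the extra factor $2^{-k}$ from $\|\Delta_ka\|_{L^\infty}$ against the $\ell^2$ summation in $j$, rather than bounding block by block. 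Once the commutator bound is established, Proposition \ref{P:nonlin continuous} follows immediately from the duality reduction of the first paragraph.
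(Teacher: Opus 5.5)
\textbf{There is a genuine gap: the commutator estimate your whole argument rests on is false.}

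Your first step matches the paper's sketch: duality between $\dot H^{\alpha-1}$ and $\dot H^{1-\alpha}$, plus the mapping $T_\alpha[\varphi]:\dot H^{\alpha}\to\dot H^{1-\alpha}$. The paper asserts that mapping only for a fixed smooth $\varphi$, and takes the quantitative $C^2$ constant from Isett--Ma. The bound you then aim for, $\|[a,(-\Delta)^\alpha]h\|_{\dot H^{1-\alpha}}\le C\|\nabla a\|_{L^\infty}\|h\|_{\dot H^\alpha}$, fails for every $\alpha\in(0,1)$.

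Here is a counterexample. Take $h=\cos x_1$ and $a=\partial_2\varphi=2^{-k}\cos(2^kx_2)$ with $\varphi=2^{-2k}\sin(2^kx_2)$. Then $\|\nabla a\|_{L^\infty}=1$, $\|\varphi\|_{C^2}\lesssim 1$ and $\|h\|_{\dot H^\alpha}\sim1$. On the other hand $[a,(-\Delta)^\alpha]h=\bigl(1-(1+4^k)^\alpha\bigr)\,ah$, whose $\dot H^{1-\alpha}$ norm is $\sim 2^{2k\alpha}\,2^{k(1-\alpha)}\,2^{-k}=2^{k\alpha}\to\infty$.

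In your decomposition the failure sits in $(-\Delta)^\alpha T_h a$, which you say is ``treated the same way'' as $T_{(-\Delta)^\alpha h}a$. It is not. There the factor $2^{2k\alpha}$ falls on the output frequency, so the weights are $2^{k(1-\alpha)}2^{2k\alpha}2^{-k}=2^{k\alpha}$ against $\|S_{k-1}h\|_{L^2}$, which grow instead of giving $2^{-(k-j)\alpha}$. The low--high piece you flag as the main obstacle is actually the routine part. So no componentwise scalar commutator bound in $\dot H^{1-\alpha}$ can produce a $C^2$ constant.

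\textbf{What is missing.} You need the cancellation you already used at $\alpha=1$, and which duality throws away. $\nabla\varphi$ is a gradient and $\mathcal R^\perp_\alpha\theta=\nabla^\perp w$ is divergence-free, so the large part $(-\Delta)^\alpha(\nabla\varphi\,h)\approx(-\Delta)^\alpha\nabla(\varphi h)$ is annihilated by the pairing. Concretely, moving $(-\Delta)^\alpha$ and $\nabla^\perp$ in the second term of the commutator and using $\nabla^\perp\cdot\nabla\varphi=0$ gives, for all $w_1,w_2$,
\[
\langle \nabla^\perp w_1, T_\alpha[\varphi]w_2\rangle=\int_{\T^2}(\nabla\varphi\cdot\nabla^\perp w_1)\,(-\Delta)^\alpha w_2\,dx+\int_{\T^2}(-\Delta)^\alpha w_1\,(\nabla\varphi\cdot\nabla^\perp w_2)\,dx .
\]

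\textbf{How to repair it.} Split the trilinear form $(\varphi,w_1,w_2)$ into Littlewood--Paley pieces, and write $c_j=2^{j\alpha}\|\Delta_jw\|_{L^2}$.
\begin{itemize}
\item When $\varphi$ carries the lowest frequency, use the commutator form. Your low--high kernel bound then gives $\|\nabla^2\varphi\|_{L^\infty}c_k^2$.
\item When $\nabla\varphi$ carries one of the two top frequencies $2^k$ and a copy of $w$ sits at $2^j\ll2^k$, use the identity above together with $\|\Delta_k\nabla\varphi\|_{L^\infty}\lesssim2^{-k}\|\nabla^2\varphi\|_{L^\infty}$. This gives the factors $2^{-(k-j)(1-\alpha)}c_jc_k$ and $2^{-(k-j)\alpha}c_jc_k$, which Schur's test sums.
\item The fully comparable case gives $c_k^2$.
\end{itemize}

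If you insist on the duality route, the commutator bound does hold once $\|\nabla a\|_{L^\infty}$ is replaced by a higher norm such as $\|\varphi\|_{C^3}$. That is enough for how the paper uses the proposition: fixed test functions, and polynomial growth in $N$ in the no-instantaneous-dissipation argument. It does not, however, prove the stated $\|\varphi\|_{C^2}$ bound.
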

We have thus motivated the following definition.

\begin{definition}[Weak solutions to \eqref{g-SQG}]
    \label{def.sqg-solutions}
    Let $\alpha\in (0,1]$. Let $\theta_0\in \dot H^{-\alpha}(\T^2)$ be with zero average and $f\in L^1_{\rm loc}([0,\infty)\dot H^{-\alpha} (\T^2))$ be with zero spatial average for almost all times. We say that $\theta \in L^{2}_{\rm loc}([0,\infty), \dot H^{-\alpha}(\T^2))$, with $\langle \theta(t),1\rangle=0$ for almost every $t\geq 0$, is a weak solution to \eqref{g-SQG} if 
\begin{align}
    \int_0^\infty \left\langle \theta (t),\partial_t\varphi (t)\right\rangle dt &+\mez  \int_0^\infty \left\langle \mathcal R^\perp_\alpha \theta (t),T_\alpha[\varphi(t)](-\Delta)^{-\alpha}\theta(t) \right\rangle dt \\
    &= -\left\langle\theta_0,\varphi(0)\right\rangle - \int_0^\infty \langle f(t),\varphi(t)\rangle \,dt \label{eq.sqg-weak-sol}
\end{align}
for all $\varphi \in C^{\infty}_c([0,\infty)\times \T^2)$, where $\mathcal R^\perp_\alpha=\nabla^\perp(-\Delta)^{-\alpha}$ and $ T_\alpha [\varphi(t)] := [\nabla \varphi (t) , (-\Delta)^{\alpha}]$.
\end{definition}

\begin{remark}
    \label{R:nonlinearity continuous} 
 Proposition \ref{P:nonlin continuous} proves that, for any smooth test function $\varphi$, the nonlinear term in the distributional identity \eqref{eq.sqg-weak-sol} is a bilinear and continuous form on $L^2_{\rm loc}([0,\infty);\dot H^{-\alpha}(\T^2))$. 
\end{remark}

Weak solutions to \eqref{g-SQG diss} for $\nu>0$ are defined in a similar fashion. Because of the additional dissipative term, there is enough compactness to prove their existence for any $\dot H^{-\alpha}(\T^2)$ initial datum. Such solutions naturally come with an energy inequality and, therefore, we directly enforce it in the definition. Since \cite{L34}, weak solutions of a dissipative nonlinear PDE are commonly known as ``Leray solutions''.

\begin{definition}[Leray solutions to \eqref{g-SQG diss}]
    \label{def.viscous-sqg-sol}
    Let $\alpha\in (0,1]$, $\gamma\in (0,\infty)$ and $\nu>0$. Let $\theta^\nu_0\in \dot H^{-\alpha}(\T^2)$ be with zero average and $f^\nu\in L^1_{\rm loc}([0,\infty)\dot H^{-\alpha} (\T^2))$ be with zero spatial average for almost all times. We say that 
    $$
    \theta^\nu \in L^{\infty}_{\rm loc}([0,\infty), \dot H^{-\alpha}(\T^2))\cap L^{2}_{\rm loc}([0,\infty), \dot H^{\gamma-\alpha}(\T^2))\cap C^0_{\rm w}([0,\infty), \dot H^{-\alpha}(\T^2)),
    $$
    with $\langle \theta^\nu(t),1\rangle=0$ for all $t\geq 0$, is a weak solution to \eqref{g-SQG diss} if 
\begin{align}
    \int_0^\infty \left\langle \theta^\nu (t),\partial_t\varphi (t) - \nu (-\Delta)^\gamma \varphi (t)\right\rangle dt &+\mez  \int_0^\infty \left\langle \mathcal R^\perp_\alpha \theta^\nu (t),T_\alpha[\varphi(t)](-\Delta)^{-\alpha}\theta^\nu(t) \right\rangle dt  \\
    &= -\left\langle\theta^\nu_0,\varphi(0)\right\rangle - \int_0^\infty \langle f^\nu(t),\varphi(t)\rangle \,dt
\end{align}
for all $\varphi \in C^{\infty}_c([0,\infty)\times \T^2)$, where $\mathcal R^\perp_\alpha=\nabla^\perp(-\Delta)^{-\alpha}$ and $ T_\alpha [\varphi(t)] := [\nabla \varphi (t) , (-\Delta)^{\alpha}]$. In addition, the energy inequality
\begin{equation}
    \label{en ineq viscous gsqg}
  \frac12  \|\theta^\nu(t)\|^2_{\dot H^{-\alpha} } + \nu \int_0^t\|\theta^\nu(\tau)\|^2_{\dot H^{\gamma-\alpha}}\,d\tau \leq   \frac12  \|\theta^\nu_0\|^2_{\dot H^{-\alpha} } + \int_0^t\langle f^\nu (\tau),\theta^\nu(\tau)\rangle_{\dot H^{-\alpha}}\,d\tau
\end{equation}
holds for all $t\geq 0$, where $\langle \cdot,\cdot\rangle_{\dot H^{-\alpha}}$ denotes the $\dot H^{-\alpha}(\T^2)$ inner product.
\end{definition}

\begin{remark}\label{R:weak form}
Since $\theta^\nu\in C^0_{\rm w}([0,\infty), \dot H^{-\alpha}(\T^2))$, one can let the test function $\varphi(x,\tau)$ in the weak formulation converge to $\psi (x) \mathbbm{1}_{[0,t]}(\tau)$ and obtain
\begin{align}
   \left\langle\theta^\nu_0- \theta^\nu(t), \psi\right\rangle  &= \nu\int_0^t \left\langle \theta^\nu (\tau),  (-\Delta)^\gamma \psi\right\rangle d\tau -\mez  \int_0^t \left\langle \mathcal R^\perp_\alpha \theta^\nu (\tau),T_\alpha[\psi](-\Delta)^{-\alpha}\theta^\nu(\tau) \right\rangle d\tau  \\
    &\quad  - \int_0^t \langle f^\nu(\tau),\psi\rangle \, d\tau
\end{align}
for all $\psi\in C^\infty(\T^2)$ and all $t>0$.
\end{remark}

\begin{remark}
    \label{R:uniform hamilt bound}
  The energy inequality \eqref{en ineq viscous gsqg} yields to 
  \begin{align}
      \|\theta^\nu(t)\|^2_{\dot H^{-\alpha}} &\leq  \|\theta^\nu_0\|^2_{\dot H^{-\alpha}} + 2 \int_0^t\|f^\nu(\tau)\|_{\dot H^{-\alpha}} \|\theta^\nu(\tau)\|_{\dot H^{-\alpha}}\,d\tau \qquad \forall t>0.
  \end{align}
Therefore, by Gr\"onwall we deduce 
  $$
    \|\theta^\nu(t)\|_{\dot H^{-\alpha}} \leq \|\theta^\nu_0\|_{\dot H^{-\alpha}}  + \int_0^t \|f^\nu(\tau)\|_{\dot H^{-\alpha}}\,d\tau\qquad \forall t>0.
  $$
\end{remark}

Leray solutions in the sense of Definition \ref{def.viscous-sqg-sol} can be proved to exist by nowadays standard regularization and compactness methods. Indeed, by smoothness, the sequence of solutions to the regularized version of \eqref{g-SQG diss} satisfies \eqref{en ineq viscous gsqg} with the equality. Therefore, the sequence of approximate solutions stays bounded in $L^{\infty}_{\rm loc}([0,\infty), \dot H^{-\alpha}(\T^2))\cap L^{2}_{\rm loc}([0,\infty), \dot H^{\gamma-\alpha}(\T^2))$ (see also Remark \ref{R:uniform hamilt bound} above). Since $\gamma>0$, the Aubin--Lions lemma yields compactness in $L^2([0,\infty), \dot H^{-\alpha}(\T^2))$, which is enough to pass to the limit into the nonlinearity (see Remark \ref{R:nonlinearity continuous}). Then, the energy inequality \eqref{en ineq viscous gsqg} follows by the lower semicontinuity of the norms under weak convergence. The case $\alpha=\frac12$ was established in \cite{Mar08a} and it is not difficult to see that the method generalizes to any $\alpha\in (0,1]$. Note that such solutions are, in general, not unique \cites{AC23,MS06}. We refer to \cites{M06,J07,CCW01,CW99,DC12} and references therein for conditional uniqueness results. Because of the nonuniqueness issue, for us it is important to work with a subclass of Leray solutions. As it has already been explained in \cref{S:heuristic} and Remark \ref{R:main issues}, we will need Leray solutions that satisfy the additional higher-order bound \eqref{higher order bound idea}. Let us now prove that such solutions exist. As an outcome of the construction, we are also able to upgrade \eqref{en ineq viscous gsqg} to an equality. To the best of our knowledge this is also new.

\begin{proposition}[Leray solutions with higher-order bound]
    \label{P:leray exist}
    Let $\alpha\in (0,1]$, $\gamma\in (0,\infty)$ and $\nu\in (0,1)$. Let $\theta^\nu_0\in \dot H^{-\alpha}(\T^2)$ be with zero average and $f^\nu\in L^{\frac{\alpha+\gamma}{\gamma}}_{\rm loc}([0,\infty);\dot H^{-\alpha} (\T^2))$ be with zero spatial average for almost all times. If $\alpha>\gamma$ we make the stronger assumption that $f^\nu\in L^\infty_{\rm loc}([0,\infty);\dot H^{-\gamma} (\T^2))$. There exists a Leray solution to \eqref{g-SQG diss} in the sense of Definition \ref{def.viscous-sqg-sol} with the following additional properties
    \begin{itemize}
        \item[$(a)$] $ \theta^\nu \in C^0([0,\infty), \dot H^{-\alpha}(\T^2))\cap L^{2}_{\rm loc}([0,\infty), \dot H^{\gamma-\alpha}(\T^2))$;
        \item[$(b)$] for all $t\geq 0$ it holds 
        $$
        \frac12  \|\theta^\nu(t)\|^2_{\dot H^{-\alpha} } + \nu \int_{0}^t\|\theta^\nu(\tau)\|^2_{\dot H^{\gamma-\alpha}}\,d\tau =   \frac12  \|\theta^\nu_0\|^2_{\dot H^{-\alpha} } + \int_0^t\langle f^\nu (\tau),\theta^\nu(\tau)\rangle_{\dot H^{-\alpha}}\,d\tau;
        $$
        \item[$(c)$]  $ \theta^\nu \in L^2_{\rm loc} ((0,\infty);\dot H^\gamma (\T^2))$. More precisely, for any $0<\delta<T<\infty$ there exists a constant $M<\infty $ such that 
        \begin{equation}
            \label{hob}
             \nu^{\frac{\alpha+\gamma}{\gamma}} \int^T_\delta \|\theta^\nu(t)\|_{\dot H^{\gamma}}^2 \, dt\leq M.
        \end{equation}
        The constant $M$ depends\footnote{From the proof it will be clear that the dependence can be made explicit if needed.} only on $\delta,T, \|\theta^\nu_0\|_{\dot H^{-\alpha}}$, $\|f^\nu\|_{L^{\frac{\alpha+\gamma}{\gamma}}\dot H^{-\alpha}}$ if $\alpha\leq \gamma$ and on $\|f^\nu\|_{L^\infty\dot H^{-\gamma}}$ if $\alpha>\gamma$, but it is otherwise independent on $\nu\in (0,1)$.
    \end{itemize}
\end{proposition}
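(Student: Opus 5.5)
We construct the solution by Galerkin approximation. Let $\theta^{\nu,K}$ solve the projected system
$$
\partial_t\theta^{\nu,K} + P_{\le K}\big(\mathcal R^\perp_\alpha\theta^{\nu,K}\cdot\nabla\theta^{\nu,K}\big) + \nu(-\Delta)^\gamma\theta^{\nu,K} = P_{\le K} f^\nu ,
$$
with initial datum $P_{\le K}\theta^\nu_0$. Here $P_{\le K}$ is the sharp Fourier cut-off from \eqref{freq cuts}. Since $P_{\le K}$ is self-adjoint and commutes with $|\nabla|^s$, both cancellations $\langle u\cdot\nabla\theta,\theta\rangle_{\dot H^{-\alpha}}=0$ and $\langle u\cdot\nabla\theta,\theta\rangle_{L^2}=0$ survive the truncation. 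Hence the ODE is globally solvable and satisfies, exactly, the Hamiltonian balance \eqref{dissip en bal} and the $L^2$ balance
$$
\tfrac12\tfrac{d}{dt}\|\theta\|_{L^2}^2+\nu\|\theta\|_{\dot H^\gamma}^2=\langle f,\theta\rangle .
$$

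\textbf{Step 1: higher-order bound at the Galerkin level, uniformly in $K$.} Remark \ref{R:uniform hamilt bound} gives $\sup_{[0,T]}\|\theta\|_{\dot H^{-\alpha}}\le A$, with $A$ depending only on the data.

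In the case $\alpha\le\gamma$, bound $|\langle f,\theta\rangle|\le \|f\|_{\dot H^{-\alpha}}\|\theta\|_{\dot H^\alpha}$. Interpolate $\|\theta\|_{\dot H^\alpha}\lesssim A^{\frac{\gamma-\alpha}{\alpha+\gamma}}\|\theta\|_{\dot H^\gamma}^{\frac{2\alpha}{\alpha+\gamma}}$, and apply Young with exponents $\frac{\alpha+\gamma}{\alpha}$ and $\frac{\alpha+\gamma}{\gamma}$. This absorbs half of the dissipation and leaves a remainder $C\nu^{-\alpha/\gamma}\|f\|_{\dot H^{-\alpha}}^{\frac{\alpha+\gamma}{\gamma}}$, which is integrable in time by assumption. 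Set $y=\|\theta\|_{L^2}^2$. Using $y\lesssim A^{\frac{2\gamma}{\alpha+\gamma}}\|\theta\|_{\dot H^\gamma}^{\frac{2\alpha}{\alpha+\gamma}}$, we obtain
$$
y'+c\nu y^{\frac{\alpha+\gamma}{\alpha}}\le g(t),\qquad \int_0^T g\lesssim \nu^{-\alpha/\gamma}.
$$
We compare with the barrier $Y(t)=C(\nu t)^{-\alpha/\gamma}$, which solves $Y'=-c'\nu Y^{(\alpha+\gamma)/\alpha}$ with $Y(0^+)=\infty$. A standard ODE comparison (split at times where $y$ exceeds $2Y$, where the nonlinear damping dominates) then yields $y(\delta)\lesssim(\nu\delta)^{-\alpha/\gamma}+\nu^{-\alpha/\gamma}$ independently of $y(0)$. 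Integrating the $L^2$ balance on $[\delta,T]$ gives \eqref{hob}, since $\nu\in(0,1)$.

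In the case $\alpha>\gamma$, the $L^2$ balance only controls $\langle f,\theta\rangle$ via $\|f\|_{\dot H^{-\gamma}}\|\theta\|_{\dot H^\gamma}\le \frac\nu2\|\theta\|_{\dot H^\gamma}^2+\frac1{2\nu}\|f\|^2_{\dot H^{-\gamma}}$. The remainder $\nu^{-1}$ is larger than $\nu^{-\alpha/\gamma}$... in fact it is smaller, since $\alpha/\gamma>1$ and $\nu<1$, so $\nu^{-1}\le\nu^{-\alpha/\gamma}$. This is why the $L^\infty\dot H^{-\gamma}$ assumption suffices. The same comparison argument then applies, now with $g\lesssim \nu^{-1}$ pointwise. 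I expect this comparison/Grönwall step, with the remainder scaling correctly in $\nu$, to be the main obstacle.

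\textbf{Step 2: passage to the limit $K\to\infty$.} We have uniform bounds in $L^\infty\dot H^{-\alpha}\cap L^2\dot H^{\gamma-\alpha}$. The time derivative is bounded in $L^1\dot H^{-s}$ for large $s$, by Proposition \ref{P:nonlin continuous}. Aubin--Lions therefore gives strong convergence in $L^2_{\rm loc}\dot H^{-\alpha}$. By Remark \ref{R:nonlinearity continuous}, the limit is a weak solution. The bound \eqref{hob} passes to the limit by lower semicontinuity, which gives $(c)$ with $\theta^\nu\in L^2_{\rm loc}((0,\infty);\dot H^\gamma)$.

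\textbf{Step 3: energy equality and continuity, i.e. $(a)$ and $(b)$.} On $[\delta,T]$ the solution lies in $L^2\dot H^\gamma$. Interpolating with $L^\infty\dot H^{-\alpha}$ gives enough regularity to test the equation with $|\nabla|^{-2\alpha}\theta^\nu$ (mollified in space). The nonlinear term cancels in the limit, since by Proposition \ref{P:nonlin continuous}-type commutator estimates the trilinear form is continuous in the available norm. Hence $(b)$ holds on $[\delta,t]$.

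To reach $t=0$, note that $\limsup_{t\to0}\|\theta^\nu(t)\|_{\dot H^{-\alpha}}\le\|\theta_0^\nu\|_{\dot H^{-\alpha}}$ by the energy inequality. Combined with weak continuity, this gives strong continuity at $0$. Letting $\delta\to0$ in the identity yields $(b)$ on $[0,t]$, and $(b)$ in turn gives continuity of the norm, hence $(a)$.
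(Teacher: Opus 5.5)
\textbf{Steps 1 and 2 are fine.} Step 1 takes a different route from the paper. For $\alpha\le\gamma$ the paper first bounds $\int_0^t\|\theta\|_{L^2}^2\lesssim\nu^{-\alpha/\gamma}$, then integrates the $L^2$ balance in $s\in(0,t)$ (Fubini produces the weight $\tau$). It uses an ODE trapping argument only for $\alpha>\gamma$, with pointwise bounded forcing. Your single comparison argument with merely integrable $g$ is correct: $w:=y-\int_0^t g$ is nonincreasing with $w'\le -c\nu\, w_+^{(\alpha+\gamma)/\alpha}$, so $y(t)\le C(\nu t)^{-\alpha/\gamma}+\int_0^t g$ independently of $y(0)$. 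For $\alpha>\gamma$ this would even allow $f\in L^2_t\dot H^{-\gamma}$. Galerkin truncation in place of the paper's mollified nonlinearity $J_\eps(J_\eps u^\eps\cdot\nabla J_\eps\theta^\eps)$ is equally fine, since both preserve the two cancellations.

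\textbf{The genuine gap is Step 3.} You want $(b)$ by testing the limit equation with a mollification of $|\nabla|^{-2\alpha}\theta^\nu$. This rests on the claim that the trilinear form is continuous in the norms available on $[\delta,T]$, and that is false in general. By scaling, an estimate $\bigl|\int \nabla^\perp|\nabla|^{-2\alpha}\theta_1\cdot\nabla\theta_2\,|\nabla|^{-2\alpha}\theta_3\,dx\bigr|\lesssim\prod_i\|\theta_i\|_{\dot H^{r_i}}$ requires $r_1+r_2+r_3\ge 3-4\alpha$. The commutator structure only moves derivatives between factors; it does not lower this total. On $[\delta,T]$ you have $L^\infty_t(\dot H^{-\alpha}\cap L^2)\cap L^2_t\dot H^\gamma$, which supplies at most $2\gamma$ derivatives in a time-integrable way: one factor in $L^\infty_tL^2$ and two in $L^2_t\dot H^\gamma$, or all three in $L^3_t\dot H^{2\gamma/3}$. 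If you only interpolate with $L^\infty_t\dot H^{-\alpha}$, as you wrote, you get even less. So the argument closes only when $2\gamma\ge 3-4\alpha$. It fails, for instance, for $\alpha=\tfrac12$, $\gamma<\tfrac12$; this is an Onsager-type obstruction.

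\textbf{How to close it.} You never need to justify the cancellation on the limit. Pass to the limit in the \emph{exact} Galerkin balance on $[s,t]$ with $s>0$, where the nonlinear term is identically zero. For fixed $\nu$, your Step 1 already gives bounds on $\sup_{[\delta,T]}\|\theta^K\|_{L^2}$ and $\int_\delta^T\|\theta^K\|_{\dot H^\gamma}^2$ that are uniform in $K$. Moreover $\partial_t\theta^K$ is bounded in $L^{r}(0,T;\dot H^{-s})$ with $r=\frac{\alpha+\gamma}{\gamma}>1$. Aubin--Lions then gives strong convergence in $L^2(\delta,T;\dot H^{\gamma-\alpha})$, since $\dot H^\gamma\subset\subset\dot H^{\gamma-\alpha}$. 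It also gives strong convergence in $C^0([\delta,T];\dot H^{-\alpha})$, since $L^2\subset\subset\dot H^{-\alpha}$. This is exactly what is needed to pass to the limit in every term of the balance (dissipation, endpoint norms, and forcing), and it is how the paper obtains $(b)$ for positive times. Your treatment of $s\to0^+$ via $\limsup_{t\to0^+}\|\theta(t)\|_{\dot H^{-\alpha}}\le\|\theta_0\|_{\dot H^{-\alpha}}$ and weak continuity then goes through as written.
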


\begin{proof}

    The proof is a suitable adaptation of \cite{DLP25}*{Proposition 2.7}. The presence of the external force adds some technicalities in establishing the higher-order bound \eqref{hob}. We break the proof down into steps. Since here the viscosity parameter $\nu\in (0,1)$ will be fixed, we will remove it from the superscripts and simply denote a solution to \eqref{g-SQG diss} by $\theta$.

    \underline{\textsc{Step 1}}: Apriori estimates for smooth solutions.

    We start by proving several apriori bounds for smooth solutions to \eqref{g-SQG diss}. Recall the notation of the inner products from \cref{S:notation}. We will only make use of the following two cancellations of the nonlinearity
    \begin{equation}
        \label{cancellation hamiltonian}
        \langle u\cdot \nabla \theta,\theta\rangle_{\dot H^{-\alpha}}=\int_{\T^2}  u\cdot \nabla \theta (-\Delta)^{-\alpha}\theta\,dx = - \int_{\T^2}  \theta  u\cdot \nabla (-\Delta)^{-\alpha}\theta\,dx =0
    \end{equation}
and 
  \begin{equation}
        \label{cancellation L2}
        \langle u\cdot \nabla \theta,\theta\rangle=\int_{\T^2}  \theta u\cdot \nabla \theta \,dx =  \int_{\T^2}    u\cdot \nabla \frac{|\theta|^2}{2}\,dx =0.
    \end{equation}
The first one comes from $u=\nabla^\perp (-\Delta)^{-\alpha}\theta$, while the second follows from $\div u=0$. By \eqref{cancellation hamiltonian} we deduce 
\begin{equation}
    \label{ham cons in proof}
    \frac12  \|\theta(t)\|^2_{\dot H^{-\alpha} } + \nu \int_{s}^t\|\theta(\tau)\|^2_{\dot H^{\gamma-\alpha}}\,d\tau =   \frac12  \|\theta (s)\|^2_{\dot H^{-\alpha} } + \int_s^t\langle f (\tau),\theta(\tau)\rangle_{\dot H^{-\alpha}}\,d\tau
\end{equation}
for all $0\leq s<t$, while from \eqref{cancellation L2} we get 
\begin{equation}
    \label{L2 cons in proof}
    \frac12  \|\theta(t)\|^2_{L^2 } + \nu \int_{s}^t\|\theta(\tau)\|^2_{\dot H^{\gamma}}\,d\tau =   \frac12  \|\theta (s)\|^2_{L^2 } + \int_s^t\langle f (\tau),\theta(\tau)\rangle\,d\tau
\end{equation}
for all $0\leq s<t$. By \eqref{ham cons in proof} computed at $s=0$ it follows 
\begin{align}
    \|\theta(t)\|^2_{\dot H^{-\alpha}}\leq  \|\theta_0\|^2_{\dot H^{-\alpha}}+ 2\int_0^t  \|f(\tau)\|_{\dot H^{-\alpha}}\|\theta(\tau)\|_{\dot H^{-\alpha}}\,d\tau.
\end{align}
The Gr\"onwall inequality then yields to the first apriori bound 
\begin{equation}
    \label{1 apriori bound}
     \|\theta(t)\|_{\dot H^{-\alpha}}\leq \|\theta_0\|_{\dot H^{-\alpha}}+ \int_0^t  \|f(\tau)\|_{\dot H^{-\alpha}}\,d\tau \qquad \forall t>0.
\end{equation}

By plugging \eqref{1 apriori bound} into \eqref{ham cons in proof} we also deduce
\begin{align}
    \nu \int_{0}^t\|\theta(\tau)\|^2_{\dot H^{\gamma-\alpha}}\,d\tau\leq \frac12 \|\theta_0\|^2_{\dot H^{-\alpha}}+ \left(\|\theta_0\|_{\dot H^{-\alpha}} + \int_0^t  \|f(\tau)\|_{\dot H^{-\alpha}}\,d\tau\right) \int_0^t  \|f(\tau)\|_{\dot H^{-\alpha}}\,d\tau \label{2 apriori bound}
\end{align}
for all $t>0$, that is our second apriori bound.  Note that \eqref{1 apriori bound} and \eqref{2 apriori bound} only require $f\in L^1
_{\rm loc}([0,\infty);\dot H^{-\alpha}(\T^2))$. To derive the higher-order bound \eqref{hob} that is key for Theorem \ref{T:main diss}, we will need the extra assumptions on $f$. Let us start with the easier case 
\begin{equation}
    \label{subcrit}
    \alpha\leq \gamma.
\end{equation}
By interpolation and Young's inequality we have
\begin{align}
    \|\theta(\tau)\|^2_{L^2}\leq C   \|\theta(\tau)\|^{2\frac{\gamma-\alpha}{\gamma}}_{\dot H^{-\alpha}}\|\theta(\tau)\|^{2\frac{\alpha}{\gamma}}_{\dot H^{\gamma-\alpha}}\leq C\left(\nu^{-\frac{\alpha}{\gamma}}\|\theta(\tau)\|^{2}_{\dot H^{-\alpha}} + \nu^{\frac{\gamma-\alpha}{\gamma}}\|\theta(\tau)\|^{2}_{\dot H^{\gamma-\alpha}} \right).\label{young split}
\end{align}
Therefore, by \eqref{1 apriori bound} and \eqref{2 apriori bound} we deduce 
\begin{equation}
    \label{integrated L2}
    \int_0^t \|\theta(\tau)\|^2_{L^2}\,d\tau\leq C(t) \nu^{-\frac{\alpha}{\gamma}}\qquad \forall t>0,
\end{equation}
for a constant $C(t)<\infty$ for all $t>0$, monotone increasing in $t$, that only depends on $\|\theta_0\|_{\dot H^{-\alpha}}$ and $\int_0^t \|f(\tau)\|_{\dot H^{-\alpha}}\,d\tau$, but it is otherwise independent on $\nu>0$. We now want to use \eqref{integrated L2} into \eqref{L2 cons in proof} to achieve \eqref{hob}. Note that 
\begin{equation}\label{fubini}
\int_0^t\int_s^t\|\theta(\tau)\|^2_{\dot H^\gamma}\,d\tau ds= \int_0^t\tau \|\theta(\tau)\|^2_{\dot H^\gamma}\,d\tau
\end{equation}
by Fubini. Therefore, integrating \eqref{L2 cons in proof} in $s\in (0,t)$ and also using \eqref{integrated L2}, we deduce 
\begin{align}
    \frac{t}{2} \|\theta(t)\|^2_{L^2}+\nu\int_0^t\tau \|\theta(\tau)\|^2_{\dot H^\gamma}\,d\tau&\leq \frac12  \int_0^t \|\theta(s)\|^2_{L^2}\,ds\\
    &\quad +\int_0^t\int_s^t \|f(\tau)\|_{\dot H^{-\alpha}}\|\theta(\tau)\|_{\dot H^{\alpha}}\,d\tau ds\\
    &\leq C(t) \nu^{-\frac{\alpha}{\gamma}} +\int_0^t\int_s^t \|f(\tau)\|_{\dot H^{-\alpha}}\|\theta(\tau)\|_{\dot H^{\alpha}}\,d\tau ds.\label{estimate not generalized}
\end{align}
We are left to handle the very last term. Since we are working in the regime \eqref{subcrit}, we can interpolate
\begin{equation}\label{interp H alpha and H gamma}
\|\theta(\tau)\|_{\dot H^{\alpha}}\leq C \|\theta(\tau)\|_{\dot H^{-\alpha}}^{\frac{\gamma-\alpha}{\alpha+\gamma}}\|\theta(\tau)\|_{\dot H^{\gamma}}^{\frac{2\alpha}{\alpha+\gamma}}.
\end{equation}
Together with \eqref{1 apriori bound}, this yields to 
\begin{equation}
    \label{new interp}
 \int_0^t\int_s^t \|f(\tau)\|_{\dot H^{-\alpha}}\|\theta(\tau)\|_{\dot H^{\alpha}}\,d\tau ds\leq \tilde C(t) \int_0^t\int_s^t \|f(\tau)\|_{\dot H^{-\alpha}}\|\theta(\tau)\|^{\frac{2\alpha}{\alpha+\gamma}}_{\dot H^{\gamma}}\,d\tau ds,
\end{equation}
for a constant $\tilde C(t)<\infty$ for all $t>0$, monotone increasing in $t$. By Young's inequality 
$$
\|f(\tau)\|_{\dot H^{-\alpha}}\|\theta(\tau)\|^{\frac{2\alpha}{\alpha+\gamma}}_{\dot H^{\gamma}}\leq C(t) \nu^{-\frac{\alpha}{\gamma}} \|f(\tau)\|^{\frac{\alpha+\gamma}{\gamma}}_{\dot H^{-\alpha}} + \frac{\nu}{2\tilde C (t)} \|\theta(\tau)\|^2_{\dot H^\gamma}.
$$
Therefore, \eqref{new interp} becomes 
$$
\int_0^t\int_s^t \|f(\tau)\|_{\dot H^{-\alpha}}\|\theta(\tau)\|_{\dot H^{\alpha}}\,d\tau ds\leq C(t) \nu^{-\frac{\alpha}{\gamma}} \int_0^t \|f(\tau)\|^{\frac{\alpha+\gamma}{\gamma}}_{\dot H^{-\alpha}}\,d\tau + \frac{\nu}{2} \int_0^t \tau \|\theta(\tau)\|^2_{\dot H^\gamma}\,d\tau,
$$
where we have used again \eqref{fubini}. By plugging this last estimate into \eqref{estimate not generalized} we achieve 
\begin{align}
    \frac{t}{2} \|\theta(t)\|^2_{L^2}+\nu\int_0^t\tau \|\theta(\tau)\|^2_{\dot H^\gamma}\,d\tau \leq C(t) \nu^{-\frac{\alpha}{\gamma}} +\frac{\nu}{2}\int_0^t\tau \|\theta(\tau)\|^2_{\dot H^{\gamma}}\,d\tau.
\end{align}
Note that the last term can be absorbed in the left-hand-side. We have finally achieved, in the case $\alpha\leq \gamma$, the last two apriori estimates 
\begin{equation}
    \label{3 apriori bound}
    t\nu^{\frac{\alpha}{\gamma}}  \|\theta(t)\|^2_{L^2}\leq C(t)\qquad \forall t>0
\end{equation}
and 
\begin{equation}
    \label{4 apriori bound}
    \nu^{\frac{\alpha+\gamma}{\gamma}}  \int_0^t\tau \|\theta(\tau)\|^2_{\dot H^\gamma}\,d\tau\leq C(t)\qquad \forall t>0,
\end{equation}
for a constant $C(t)<\infty$, monotone increasing in $t$, depending only on $\|\theta_0\|_{\dot H^{-\alpha}}$, $ \int_0^t \|f(\tau)\|^{\frac{\alpha+\gamma}{\gamma}}_{\dot H^{-\alpha}}\,d\tau$ and $t$, but otherwise independent on $\nu\in(0,1)$.

We now have to handle the case 
\begin{equation}
    \label{supercrit}
    \alpha>\gamma.
\end{equation}
Note that in this case the above proof breaks down since we cannot use \eqref{young split} and \eqref{interp H alpha and H gamma} anymore. We now assume that $f\in L^\infty_{\rm loc}([0,\infty);\dot H^{-\gamma} (\T^2))$. By \eqref{L2 cons in proof} we have 
$$
\frac{d}{dt}\|\theta(t)\|^2_{L^2}=-2\nu \|\theta(t)\|^2_{\dot H^\gamma} +2\langle f(t),\theta(t)\rangle\qquad \forall t>0.
$$
We bound the term involving the force as 
$$
2\left|\langle f(t),\theta(t)\rangle\right|\leq 2 \|f(t)\|_{\dot H^{-\gamma}}\|\theta(t)\|_{\dot H^{\gamma}}\leq \nu^{-1} \|f(t)\|_{\dot H^{-\gamma}}^2 +\nu \|\theta(t)\|^2_{\dot H^{\gamma}},
$$
from which 
\begin{equation}
    \label{preparing ode}
    \frac{d}{dt}\|\theta(t)\|^2_{L^2}\leq -\nu \|\theta(t)\|^2_{\dot H^{\gamma}} +\nu^{-1} \|f(t)\|_{\dot H^{-\gamma}}^2\qquad \forall t>0.
\end{equation}
Interpolation together with \eqref{1 apriori bound} yields to
\begin{equation}
    \label{interp and ham bound}
    \|\theta(t)\|_{L^2}\leq C \|\theta(t)\|^{\frac{\gamma}{\alpha+\gamma}}_{\dot H^{-\alpha}} \|\theta(t)\|^{\frac{\alpha}{\alpha+\gamma}}_{\dot H^{\gamma}}\leq C(t) \|\theta(t)\|^{\frac{\alpha}{\alpha+\gamma}}_{\dot H^{\gamma}},
\end{equation}
for a constant $C(t)<\infty$ for all $t>0$, monotone increasing in $t$, depending only on $\|\theta_0\|_{\dot H^{-\alpha}}$ and $\int_0^t \|f(\tau)\|_{\dot H^{-\alpha}}\,d\tau$. Let us now fix an arbitrarily large time $T<\infty$. Since $\dot H^{-\gamma}(\T^2)\subset \dot H^{-\alpha}(\T^2)$ because of \eqref{supercrit}, by \eqref{interp and ham bound} we deduce 
$$
 \|\theta(t)\|_{L^2}\leq C \|\theta(t)\|^{\frac{\alpha}{\alpha+\gamma}}_{\dot H^{\gamma}}\qquad \forall t\in (0,T),
$$
for a constant $C<\infty$ that depends only on $\|\theta_0\|_{\dot H^{-\alpha}}$, $ \sup_{t\in (0,T)} \|f(t)\|_{\dot H^{-\gamma}}$ and $T$. Then, using again the assumption $f\in L^\infty_{\rm loc}([0,\infty);\dot H^{-\gamma} (\T^2))$, from \eqref{preparing ode} we derive 
\begin{equation}
    \label{ode to study}
     \frac{d}{dt}\|\theta(t)\|^2_{L^2}\leq -\nu C_1\|\theta(t)\|^{2\frac{\alpha+\gamma}{\alpha}}_{L^2} +\nu^{-1} C_2\qquad \forall t\in (0,T),
\end{equation}
with finite constants $C_1,C_2$ depending only on  $\|\theta_0\|_{\dot H^{-\alpha}}$, $ \sup_{t\in (0,T)} \|f(t)\|_{\dot H^{-\gamma}}$ and $T$. We now want to study the differential inequality \eqref{ode to study}. We claim that 
\begin{equation}
    \label{claim on ode}
    \|\theta(t)\|^2_{L^2}\leq\frac{C}{(\nu t)^\frac{\alpha}{\gamma}}\qquad \forall t\in (0,T),
\end{equation}
for a constant $C<\infty$ depending only on $C_1,C_2$ and $T$.

To prove the claim, we note that if there exists  $t_0\in [0,T)$ such that 
\begin{equation}\label{t0 below treshold}
\|\theta(t_0)\|^2_{L^2}< \left(\frac{2 C_2}{C_1 \nu^2}\right)^\frac{\alpha}{\alpha+\gamma}
\end{equation}
then necessarily
\begin{equation}\label{trapped for all t}
\|\theta(t)\|^2_{L^2}\leq \left(\frac{2 C_2}{C_1 \nu^2}\right)^\frac{\alpha}{\alpha+\gamma}\quad \forall t\in (t_0,T).
\end{equation}
This is quite immediate to see since saturating\footnote{Meaning that \eqref{trapped for all t} becomes an equality.} \eqref{trapped for all t} for some $t>t_0$ would make the right-hand-side of \eqref{ode to study} strictly negative so that $\|\theta(t)\|^2_{L^2}$ must decrease. In other words, the right-hand-side in \eqref{t0 below treshold} is a ``trapping-threshold'' for the differential inequality \eqref{ode to study}. Therefore, if \eqref{t0 below treshold} holds at $t_0=0$, then by \eqref{supercrit} and for $\nu\in (0,1)$ we deduce
$$
\|\theta(t)\|^2_{L^2}\leq  \frac{C}{\nu^{\frac{2\alpha}{\alpha+\gamma}}}\leq \frac{C}{\nu^{\frac{\alpha}{\gamma}}}\leq \frac{C T^\frac{\alpha}{\gamma}}{(\nu t)^{\frac{\alpha}{\gamma}}} \qquad \forall t\in (0,T),
$$
that is \eqref{claim on ode} holds. So assume 
\begin{equation}
    \label{initial condition}
\|\theta_0\|^2_{L^2}\geq \left(\frac{2 C_2}{C_1 \nu^2}\right)^\frac{\alpha}{\alpha+\gamma}.
\end{equation}
In this case, the left-hand-side in \eqref{ode to study} is strictly negative at $t=0$. It follows that the function $\|\theta(t)\|^2_{L^2}$ starts decreasing. Define 
\begin{equation}
    t_*:=\inf \left\{t\in (0,T)\,:\,  \|\theta(t)\|^2_{L^2}< \left(\frac{2 C_2}{C_1 \nu^2}\right)^\frac{\alpha}{\alpha+\gamma}\right\},
\end{equation}
by implicitly setting $t_*=T$ if the required condition does not hold for any $t\in (0,T)$.
If $t_*=0$\footnote{This happens if and only if \eqref{initial condition} is an equality.} then we are done by the argument above. So assume $t_*\in (0,T]$. Note that, in the case $t_*\in (0,T)$, from $t_*$ on the claim \eqref{claim on ode} holds by the same argument we have already given. We are then left to study the behavior for $t\in (0,t_*)$. By the definition of $t_*$ it follows 
$$
\|\theta(t)\|^2_{L^2}\geq \left(\frac{2 C_2}{C_1 \nu^2}\right)^\frac{\alpha}{\alpha+\gamma} \qquad \forall t\in (0,t_*).
$$
With this restriction \eqref{ode to study} becomes 
\begin{equation}
    \label{ode autonomous}
     \frac{d}{dt}\|\theta(t)\|^2_{L^2}\leq -\nu \frac{C_1}{2}\|\theta(t)\|^{2\frac{\alpha+\gamma}{\alpha}}_{L^2} \qquad \forall t\in (0,t_*).
\end{equation}
Then, by the comparison principle,  $\|\theta(t)\|^2_{L^2}$ must stay below the solution of the corresponding ODE, that is
$$
\|\theta(t)\|^{2}_{L^2}\leq \frac{1}{\left(\nu t C + \|\theta_0\|^{-2\frac{\gamma}{\alpha}}_{L^2}\right)^\frac{\alpha}{\gamma}}\leq \frac{C}{(\nu t)^\frac{\alpha}{\gamma}}\qquad \forall t\in (0,t_*),
$$
for a constant $C<\infty$ depending only on $C_1$, $\alpha$ and $\gamma$. The proof of the claim \eqref{claim on ode} is concluded.

By integrating \eqref{preparing ode} and using \eqref{claim on ode} we achieve
\begin{align}
    \nu\int_s^t\|\theta(\tau)\|^2_{\dot H^\gamma}\,d\tau \leq \|\theta(s)\|^2_{L^2} +\nu^{-1} \int_s^t \|f(\tau)\|^2_{\dot H^{-\gamma}}\,d\tau\leq C\left(\frac{1}{(\nu s)^{\frac{\alpha}{\gamma}}} +\frac{1}{\nu}\right)
\end{align}
for all $0<s<t<T$. Since we are working under the assumption \eqref{supercrit} and $\nu\in (0,1)$, the latter inequality yields to the last desired higher-order bound that reads as follows. For any $0<s<T<\infty$, there exists a constant $C<\infty$ such that 
\begin{equation}
    \label{5 apriori bound}
    \nu^{\frac{\alpha+\gamma}{\gamma}}\int_s^t\|\theta(\tau)\|^2_{\dot H^\gamma}\,d\tau\leq C\qquad \forall t\in (s,T).
\end{equation}
The constant $C$ depends only on $s$, $T$, $\|\theta_0\|_{\dot H^{-\alpha}}$ and $\sup_{t\in (0,T)} \|f(t)\|_{\dot H^{-\gamma}}$, but it is otherwise independent on $\nu \in(0,1)$.

    \underline{\textsc{Step 2}}: Regularizing the PDE.

We now want to regularize \eqref{g-SQG diss} in order to obtain a smooth sequence $\{\theta^\eps\}_\eps$ of approximate solutions. In doing that, it is important to use a regularization scheme that preserves the structure of the nonlinearity so that $\{\theta^\eps\}_\eps$ enjoys all the apriori bounds proved in Step 1. More precisely, we want to preserve the two cancellations \eqref{cancellation hamiltonian} and \eqref{cancellation L2}. 

For any $\eps>0$, denote by $J_\eps$ the space mollification operator on $\R^2$ with a radially symmetric kernel and by $\tilde J_\eps$ the one in space-time. Denote by $\theta^\eps$ a solution to 
\begin{equation}\label{g-SQG diss mollified}
\left\{\begin{array}{l}
\partial_t\theta^\eps + J_\eps \left(J_\eps u^\eps \cdot\nabla J_\eps \theta^\eps\right) + \nu (-\Delta)^\gamma \theta^\eps = \tilde J_\eps f\\ 
u^\eps = \mathcal R^\perp_\alpha \theta^\eps\\
\theta^\eps(\cdot,0)=J_\eps\theta_0.
\end{array}\right. 
\end{equation}
By classical methods (see for instance \cite{BV22}*{Chapter 4} and \cite{majda2002vorticity}*{Chapter 3}) the system \eqref{g-SQG diss mollified} admits, for any $\eps>0$, a unique solution $\theta^\eps$, which is moreover smooth in space-time.  It is immediate to check that the regularized nonlinearity in \eqref{g-SQG diss mollified} preserves the two cancellations \eqref{cancellation hamiltonian} and \eqref{cancellation L2}. It follows that $\theta^\eps$ enjoys all the apriori bounds proved in Step 1. In particular, since $\theta_0\in \dot H^{-\alpha}(\T^2)$ and $f\in L^1_{\rm loc}([0,\infty); \dot H^{-\alpha}(\T^2))$, we deduce 
\begin{align}\label{unif est 1}
    \{\theta^\eps\}_\eps \text{ is bounded in } L^\infty_{\rm loc}([0,\infty); \dot H^{-\alpha}(\T^2)) \text{ by \eqref{1 apriori bound}}
\end{align}
and 
\begin{align}\label{unif est 2}
    \{\theta^\eps\}_\eps \text{ is bounded in } L^2_{\rm loc}([0,\infty); \dot H^{\gamma-\alpha}(\T^2)) \text{ by \eqref{2 apriori bound}}.
\end{align}
Moreover, since in addition $f\in L^{\frac{\alpha+\gamma}{\gamma}}_{\rm loc}([0,\infty); \dot H^{-\alpha}(\T^2))$ if $\alpha\leq \gamma$ and $f\in L^\infty_{\rm loc}([0,\infty); \dot H^{-\gamma}(\T^2))$ in the case $\alpha >\gamma$, we also have 
\begin{align}\label{unif est 3}
    \{\theta^\eps\}_\eps \text{ is bounded in } L^\infty_{\rm loc}((0,\infty); L^2(\T^2)) \text{ by \eqref{3 apriori bound} and \eqref{claim on ode}}
\end{align}
and 
\begin{align}\label{unif est 4}
    \{\theta^\eps\}_\eps \text{ is bounded in } L^2_{\rm loc}((0,\infty); \dot H^{\gamma}(\T^2)) \text{ by \eqref{4 apriori bound} and \eqref{5 apriori bound}}.
\end{align}
Also,  by smoothness, for any $\eps>0$ we have
\begin{equation}
    \label{ham bal eps}
     \frac12  \|\theta^\eps(t)\|^2_{\dot H^{-\alpha} } + \nu \int_{s}^t\|\theta^\eps(\tau)\|^2_{\dot H^{\gamma-\alpha}}\,d\tau =   \frac12  \|\theta^\eps (s)\|^2_{\dot H^{-\alpha} } + \int_s^t\langle f^\eps (\tau),\theta^\eps(\tau)\rangle_{\dot H^{-\alpha}}\,d\tau
\end{equation}
for all $0\leq s<t<\infty$.

   \underline{\textsc{Step 3}}: Compactness and conclusion.

    In Step 2 we have obtained a sequence $\{\theta^\eps\}_\eps$ of smooth solutions to \eqref{g-SQG diss mollified} enjoying \eqref{unif est 1}, \eqref{unif est 2}, \eqref{unif est 3} and \eqref{unif est 4}. By weak* compactness and the Aubin--Lions lemma we find a (nonrelabelled) subsequence and a function $\theta$ such that 
    \begin{align}
        \theta^\eps \overset{*}{\rightharpoonup}\theta \qquad &\text{ in } L^\infty_{\rm loc}([0,\infty);\dot H^{-\alpha}(\T^2)),\label{weak conv ham}\\
           \theta^\eps \overset{*}{\rightharpoonup}\theta \qquad &\text{ in } L^2_{\rm loc}([0,\infty);\dot H^{\gamma-\alpha}(\T^2)),\label{weak conv diss}\\
           \theta^\eps \rightarrow\theta \qquad &\text{ in } L^2_{\rm loc}([0,\infty);\dot H^{-\alpha}(\T^2)),\label{strong L2 conv ham}\\
           \theta^\eps \rightarrow\theta \qquad &\text{ in } L^2_{\rm loc}((0,\infty);\dot H^{\gamma-\alpha}(\T^2))\label{strong L2 conv diss}
    \end{align}
and
\begin{equation}\label{unif conv ham}
    \theta^\eps \rightarrow\theta \qquad \text{ in } C^0_{\rm loc}((0,\infty);\dot H^{-\alpha}(\T^2)).
\end{equation}
Note that $\theta$ must satisfy  \eqref{hob} by lower semicontinuity. This proves $(c)$ in the statement of Proposition \ref{P:leray exist}. By \eqref{strong L2 conv ham} together with Remark \ref{R:nonlinearity continuous} we deduce that $\theta$ satisfies the weak formulation from Definition \ref{def.viscous-sqg-sol}, i.e. it is a weak solution to \eqref{g-SQG diss}. By \eqref{strong L2 conv diss} and \eqref{unif conv ham} we can let $\eps\rightarrow 0^+$ in \eqref{ham bal eps} as soon as $s>0$ to get
\begin{equation}
    \label{exact ham bal positive times}
    \frac12  \|\theta(t)\|^2_{\dot H^{-\alpha} } + \nu \int_{s}^t\|\theta(\tau)\|^2_{\dot H^{\gamma-\alpha}}\,d\tau =   \frac12  \|\theta (s)\|^2_{\dot H^{-\alpha} } + \int_s^t\langle f (\tau),\theta(\tau)\rangle_{\dot H^{-\alpha}}\,d\tau
\end{equation}
for all $0<s<t<\infty$. We are only left to show 
\begin{equation}
\label{strong cont at time 0}
\lim_{t\rightarrow 0^+}\|\theta(t)\|_{\dot H^{-\alpha} }=\|\theta_0\|_{\dot H^{-\alpha} }.
\end{equation}
Indeed, once this is established, the strong continuity up to the initial time claimed in $(a)$ holds. This also allows to let $s\rightarrow 0^+$ in \eqref{exact ham bal positive times} and deduce the validity of $(b)$, concluding the proof of the proposition. 

By \eqref{weak conv ham} we have $\theta \in L^\infty_{\rm loc}([0,\infty); \dot H^{-\alpha}(\T^2))$. Then
\begin{equation}
    \label{leq}
    \|\theta_0\|_{\dot H^{-\alpha} }\leq \liminf_{t\rightarrow 0^+}\|\theta(t)\|_{\dot H^{-\alpha} }
\end{equation}
follows by the lower semicontinuity of the norm under weak convergence\footnote{Note that $\theta(t)\overset{*}{\rightharpoonup } \theta_0$ in $\dot H^{-\alpha}(\T^2)$ as $t\rightarrow 0^+$ by the weak formulation.}. On the other hand, by \eqref{weak conv ham}, \eqref{weak conv diss} and \eqref{unif conv ham}, by letting $\eps\rightarrow 0^+$ in \eqref{ham bal eps} for $s=0$ we obtain 
$$
\frac12  \|\theta(t)\|^2_{\dot H^{-\alpha} } + \nu \int_{0}^t\|\theta(\tau)\|^2_{\dot H^{\gamma-\alpha}}\,d\tau \leq   \frac12  \|\theta_0\|^2_{\dot H^{-\alpha} } + \int_0^t\langle f (\tau),\theta(\tau)\rangle_{\dot H^{-\alpha}}\,d\tau\qquad \forall t>0.
$$
Since the function $\tau\mapsto \langle f (\tau),\theta(\tau)\rangle_{\dot H^{-\alpha}}$ belongs to $L^1_{\rm loc}([0,\infty))$, the latter inequality yields to 
\begin{equation}
    \label{geq}
     \limsup_{t\rightarrow 0^+}\|\theta(t)\|_{\dot H^{-\alpha} }\leq \|\theta_0\|_{\dot H^{-\alpha} }.
\end{equation}
The two inequalities \eqref{leq} and \eqref{geq} together prove \eqref{strong cont at time 0}, concluding the proof.
\end{proof}

\begin{remark}
    As it is clear from the proof, the solution constructed in Proposition \ref{P:leray exist} also enjoys
    $$
    \sup_{t\in [\delta,T]} \|\theta^\nu(t)\|^2_{L^2}<\infty \qquad \text{for any } 0<\delta<T<\infty.
    $$
    We have decided not to include this additional estimate in the statement of Proposition \ref{P:leray exist} since it is not necessary to prove Theorem \ref{T:main diss}.
\end{remark}

\subsection{Compactness implies no dissipation}
In this section we prove Theorem \ref{T:main diss}. To do that, we need the following proposition excluding instantaneous dissipation. We emphasize that this does not require any of the additional properties proved in Proposition \ref{P:leray exist} and it works for weak solutions in the sense of Definition \ref{def.viscous-sqg-sol}.

\begin{proposition}[Initial compactness \& dissipation]
    \label{P:no instant dissipation}
     Let $\alpha\in (0,1]$, $\gamma\in (0,\infty)$ and $\nu \in (0,1)$. Let $\theta^\nu_0\in \dot H^{-\alpha}(\T^2)$ be with zero average and $f^\nu\in L^1_{\rm loc}([0,\infty);\dot H^{-\alpha} (\T^2))$ be with zero spatial average for almost all times. Assume that $\{f^\nu\}_\nu\subset L^1([0,\delta_0];\dot H^{-\alpha} (\T^2))$ is bounded for some $\delta_0>0$ and 
     \begin{equation}
         \label{weak comp of force}
         \lim_{\delta\rightarrow 0^+} \sup_{\nu \in (0,1)} \nu\int_0^\delta \| f^\nu (t) \|_{\dot H^{- \alpha}} \, dt =0.
     \end{equation}
      Let $\theta^\nu$ be a weak solution to \eqref{g-SQG diss} in the sense of Definition \ref{def.viscous-sqg-sol}. Then
     \begin{equation}
    \label{no instant dissipation}
     \{\theta_0^\nu\}_\nu \subset \dot H^{-\alpha}(\T^2) \text{ strongly compact} \quad \Longrightarrow \quad \lim_{\delta\rightarrow 0^+} \sup_{\nu \in (0,1)} \nu\int_0^\delta \| \theta^\nu (t) \|_{\dot H^{\gamma - \alpha}}^{2} \, dt =0.
\end{equation}
\end{proposition}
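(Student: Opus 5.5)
The strategy is to use the energy inequality \eqref{en ineq viscous gsqg} at time $\delta$ and show that $\|\theta^\nu(\delta)\|^2_{\dot H^{-\alpha}}$ cannot drop much below $\|\theta^\nu_0\|^2_{\dot H^{-\alpha}}$, uniformly in $\nu$, as $\delta\to0^+$. Indeed \eqref{en ineq viscous gsqg} gives
\begin{equation}
\nu\int_0^\delta\|\theta^\nu\|^2_{\dot H^{\gamma-\alpha}}\,dt\le \tfrac12\|\theta^\nu_0\|^2_{\dot H^{-\alpha}}-\tfrac12\|\theta^\nu(\delta)\|^2_{\dot H^{-\alpha}}+\int_0^\delta\|f^\nu\|_{\dot H^{-\alpha}}\|\theta^\nu\|_{\dot H^{-\alpha}}\,dt .
\end{equation}
By Remark \ref{R:uniform hamilt bound} and the boundedness of $\{f^\nu\}_\nu$ in $L^1([0,\delta_0];\dot H^{-\alpha})$ together with the boundedness of the compact family $\{\theta^\nu_0\}_\nu$, $\|\theta^\nu(t)\|_{\dot H^{-\alpha}}\le K$ uniformly on $[0,\delta_0]$. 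The forcing term is therefore at most $K\int_0^\delta\|f^\nu\|_{\dot H^{-\alpha}}$, and I would use \eqref{weak comp of force} (uniform integrability of the force near $t=0$) to make it small uniformly in $\nu$. I will read that hypothesis as the uniform smallness of $\int_0^\delta\|f^\nu\|_{\dot H^{-\alpha}}$; if the factor $\nu$ in it is really meant literally, this step needs to be reexamined.

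The key step is a uniform lower bound on $\|\theta^\nu(\delta)\|_{\dot H^{-\alpha}}$ obtained by testing against a fixed smooth function. By compactness of the initial data, for any $\eta>0$ there is $N$ with $\sup_\nu\|(\theta^\nu_0)_{>N}\|_{\dot H^{-\alpha}}\le\eta$. Since the family $\{(\theta^\nu_0)_{\le N}\}_\nu$ lies in a finite-dimensional bounded set, I would cover it by finitely many smooth trigonometric polynomials $\psi_1,\dots,\psi_m$ that are $\eta$-close in $\dot H^{-\alpha}$. For each $\nu$, choose $\psi=(-\Delta)^{-\alpha}\psi_j$ with $\psi_j$ the polynomial closest to $\theta^\nu_0$, and apply Remark \ref{R:weak form} with this $\psi$. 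The right-hand side is then bounded by three contributions:
\begin{itemize}
\item the viscous term, $\nu\delta\,K\|\psi\|_{\dot H^{2\gamma+\alpha}}$;
\item the nonlinear term, $\delta\,C\|\psi\|_{C^2}K^2$ by Proposition \ref{P:nonlin continuous};
\item the force term, $\|\psi\|_{\dot H^{\alpha}}\int_0^\delta\|f^\nu\|_{\dot H^{-\alpha}}$.
\end{itemize}
Since there are only finitely many $\psi_j$, all their norms are controlled by a constant $C_\eta$. Hence $|\langle\theta^\nu_0-\theta^\nu(\delta),\psi_j\rangle_{\dot H^{-\alpha}}|\le C_\eta(\delta+o_\delta(1))$, uniformly in $\nu\in(0,1)$.

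From this I conclude via
\begin{equation}
\|\theta^\nu(\delta)\|_{\dot H^{-\alpha}}\|\psi_j\|_{\dot H^{-\alpha}}\ge\langle\theta^\nu(\delta),\psi_j\rangle_{\dot H^{-\alpha}}\ge\|\theta^\nu_0\|^2_{\dot H^{-\alpha}}-2K\eta-C_\eta(\delta+o_\delta(1)),
\end{equation}
together with $\|\psi_j\|_{\dot H^{-\alpha}}\le\|\theta^\nu_0\|_{\dot H^{-\alpha}}+2\eta$. This gives $\|\theta^\nu(\delta)\|^2_{\dot H^{-\alpha}}\ge\|\theta^\nu_0\|^2_{\dot H^{-\alpha}}-O(\eta)-C_\eta o_\delta(1)$, uniformly in $\nu$; the degenerate case of small $\|\theta^\nu_0\|$ is handled directly, since then the dissipation is bounded by $\|\theta^\nu_0\|^2$ plus the force term. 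Plugging this into the first display and letting first $\delta\to0$ and then $\eta\to0$ proves \eqref{no instant dissipation}. The main obstacle is precisely this uniform weak-to-norm control: compactness of the data is what makes finitely many test functions suffice, so that the $\eta$-dependent constants do not depend on $\nu$.
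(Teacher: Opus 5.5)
Your proof is correct and follows essentially the paper's route: the energy inequality at time $\delta$, then the weak formulation (Remark \ref{R:weak form}) tested against a $\nu$-uniformly smooth low-frequency approximation of $\theta^\nu_0$, with compactness of the data controlling the high-frequency tail. The differences are minor: the paper tests directly with $|\nabla|^{-2\alpha}(\theta^\nu_0)_{\le N}$, whose smooth norms are already bounded by $N^L\|\theta^\nu_0\|_{\dot H^{-\alpha}}$ so your finite $\eta$-net is unnecessary, and it closes by bounding $\|\theta^\nu_0-\theta^\nu(\delta)\|_{\dot H^{-\alpha}}$ through a second use of the energy inequality instead of your Cauchy--Schwarz lower bound on $\|\theta^\nu(\delta)\|_{\dot H^{-\alpha}}$; also, your reading of \eqref{weak comp of force} as uniform smallness of $\int_0^\delta\|f^\nu\|_{\dot H^{-\alpha}}\,dt$, without the factor $\nu$, is exactly what the paper's proof uses in its final step.
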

The assumption \eqref{weak comp of force} prevents atomic concentrations at the initial time due to external forcing and, therefore, it is quite natural. It is satisfied, for instance, whenever $\{\|f^\nu (\cdot)\|_{\dot H^{-\alpha}}\}_\nu\subset L^1([0,\delta_0])$ is weakly compact for some $\delta_0>0$. The latter holds if $\{f^\nu \}_\nu$ is bounded in $L^p([0,\delta_0];\dot H^{-\alpha} (\T^2))$ for some $p>1$.

\begin{proof}
    Let $0<\delta_0<\delta$. By the energy inequality \eqref{en ineq viscous gsqg} we have 
    \begin{align}
        2\nu \int_0^\delta \|\theta^\nu(\tau)\|^2_{\dot H^{\gamma-\alpha}}\,d\tau &\leq  \|\theta^\nu_0\|^2_{\dot H^{-\alpha}} -  \|\theta^\nu(\delta)\|^2_{\dot H^{-\alpha}} + 2 \int_0^\delta \langle f^\nu (\tau),\theta^\nu(\tau)\rangle_{\dot H^{-\alpha}}\,d\tau\\
        &= \langle \theta^\nu_0 - \theta^\nu(\delta),\theta^\nu_0 + \theta^\nu(\delta)\rangle_{\dot H^{-\alpha}} + 2 \int_0^\delta \langle f^\nu (\tau),\theta^\nu(\tau)\rangle_{\dot H^{-\alpha}}\,d\tau\\
        &\leq  \|\theta^\nu_0 -\theta^\nu(\delta) \|_{\dot H^{-\alpha}}\|\theta^\nu_0 + \theta^\nu(\delta) \|_{\dot H^{-\alpha}}\\
        &\quad + 2 \int_0^\delta \|f^\nu (\tau)\|_{\dot H^{-\alpha}} \|\theta^\nu (\tau)\|_{\dot H^{-\alpha}}\,d\tau.\label{bound 1}
    \end{align}
    By Remark \ref{R:uniform hamilt bound}, since both $\{\theta_0^\nu\}_\nu\subset \dot H^{-\alpha}(\T^2)$ and $\{f^\nu\}_\nu \subset L^1([0,\delta_0];\dot H^{-\alpha}(\T^2))$ are bounded, we deduce
    \begin{equation}
        \label{bound 2}
         \sup_{t\in [0,\delta], \, \nu \in (0,1)}  \|\theta^\nu(t) \|_{\dot H^{-\alpha}}<\infty.
    \end{equation}
    Therefore, the previous estimate becomes 
    \begin{align}
         2\nu \int_0^\delta \|\theta^\nu(\tau)\|^2_{\dot H^{\gamma-\alpha}}\,d\tau &\leq C\left(  \|\theta^\nu_0 -\theta^\nu(\delta) \|_{\dot H^{-\alpha}} + \int_0^\delta \|f^\nu(\tau)\|_{\dot H^{-\alpha}}\,d\tau\right). \label{bound 3}
    \end{align}
    We are thus left to estimate $\|\theta^\nu_0 -\theta^\nu(\delta) \|_{\dot H^{-\alpha}}$. Let us expand it and bound it as 
    \begin{align}
        \|\theta^\nu_0 -\theta^\nu(\delta) \|^2_{\dot H^{-\alpha}}&= \|\theta^\nu(\delta) \|^2_{\dot H^{-\alpha}} - \|\theta^\nu_0 \|^2_{\dot H^{-\alpha}}+ 2 \langle \theta^\nu_0, \theta^\nu_0-\theta^\nu(\delta)\rangle_{\dot H^{-\alpha}}\\
        &\leq 2 \left( \int_0^\delta \langle f^\nu(\tau),\theta^\nu(\tau)\rangle_{\dot H^{-\alpha}} \,d\tau  + \langle \theta^\nu_0, \theta^\nu_0-\theta^\nu(\delta)\rangle_{\dot H^{-\alpha}}\right)\\
        &\leq C\left(\int_0^\delta \|f^\nu(\tau)\|_{\dot H^{-\alpha}}\,d\tau + \langle (\theta^\nu_0)_{\leq N}, \theta^\nu_0-\theta^\nu(\delta)\rangle_{\dot H^{-\alpha}} +\Phi(N)\right),\label{bound 4}
    \end{align}
    where we have used again \eqref{bound 2} and we have set 
    $$
    \Phi(N):=\sup_{\nu \in (0,1)} \| (\theta^\nu_0)_{> N}\|_{\dot H^{-\alpha}}.
    $$
    By Remark \ref{R:weak form} we can use $(\theta^\nu_0)_{\leq N}$ as a time-independent test function in the weak formulation (recall the identity \eqref{duality and inner product}) to get 
    \begin{align}
         \langle (\theta^\nu_0)_{\leq N}, \theta^\nu_0-\theta^\nu(\delta)\rangle_{\dot H^{-\alpha}} &= \left\langle |\nabla|^{-2\alpha}(\theta^\nu_0)_{\leq N}, \theta^\nu_0-\theta^\nu(\delta)\right\rangle\\
         &= \nu \int_0^\delta  \langle \theta^\nu(\tau) , (-\Delta)^\gamma |\nabla|^{-2\alpha}(\theta^\nu_0)_{\leq N}\rangle \,d\tau\\
         &\quad -\mez  \int_0^\delta \left\langle \mathcal R^\perp_\alpha \theta^\nu (\tau),T_\alpha\left[|\nabla|^{-2\alpha}(\theta^\nu_0)_{\leq N}\right](-\Delta)^{-\alpha}\theta^\nu(\tau) \right\rangle d\tau\\
         &\quad -\int_0^\delta \langle f^\nu(\tau) , |\nabla|^{-2\alpha}(\theta^\nu_0)_{\leq N}\rangle \,d\tau.
    \end{align}
    Therefore, since $\nu<1$, by Proposition \ref{P:nonlin continuous} together with \eqref{bound 2} we deduce 
    \begin{align}
        \left| \langle (\theta^\nu_0)_{\leq N}, \theta^\nu_0-\theta^\nu(\delta)\rangle_{\dot H^{-\alpha}}\right|&\leq \| (-\Delta)^\gamma (\theta^\nu_0)_{\leq N}\|_{\dot H^{-\alpha}}\int_0^\delta \| \theta^\nu (\tau)\|_{\dot H^{-\alpha}}\,d\tau\\
        &\quad +C \| |\nabla|^{-2\alpha}(\theta^\nu_0)_{\leq N}\|_{C^2} \int_0^\delta \| \theta^\nu (\tau)\|^2_{\dot H^{-\alpha}}\,d\tau\\
        &\quad + \|(\theta^\nu_0)_{\leq N}\|_{\dot H^{-\alpha}} \int_0^\delta \| f^\nu (\tau)\|_{\dot H^{-\alpha}}\,d\tau\\
        &\leq C\left(\delta \|  (\theta^\nu_0)_{\leq N}\|_{\dot H^{L-\alpha}} +\int_0^\delta \|f^\nu(\tau)\|_{\dot H^{-\alpha}}\,d\tau \right)  \\
        &\leq C\left(\delta N^L +\int_0^\delta \|f^\nu(\tau)\|_{\dot H^{-\alpha}}\,d\tau \right),
    \end{align}
    for a sufficiently large $L>1$ depending on $\gamma$. By plugging the latter estimate into \eqref{bound 4} we arrive at
    $$
         \|\theta^\nu_0 -\theta^\nu(\delta) \|^2_{\dot H^{-\alpha}}\leq C \left(\delta N^L +\int_0^\delta \|f^\nu(\tau)\|_{\dot H^{-\alpha}}\,d\tau  +\Phi(N)\right).
    $$
    Going back to \eqref{bound 3} we have proved 
    $$
    \nu \int_0^\delta \|\theta^\nu(\tau)\|^2_{\dot H^{\gamma-\alpha}}\,d\tau \leq C\left( \sqrt{\delta N^L +\int_0^\delta \|f^\nu(\tau)\|_{\dot H^{-\alpha}}\,d\tau  +\Phi(N)} +\int_0^\delta \|f^\nu(\tau)\|_{\dot H^{-\alpha}}\,d\tau \right),
    $$
    for a constant $C<\infty$ that does not depend on any of the parameters $\delta,N$ and $\nu$. Therefore, thanks to \eqref{weak comp of force}, we deduce
    $$
    \lim_{\delta\rightarrow 0^+}\sup_{\nu \in (0,1)}  \nu \int_0^\delta \|\theta^\nu(\tau)\|^2_{\dot H^{\gamma-\alpha}}\,d\tau \leq C\sqrt{\Phi(N)}\qquad \forall N>0,
    $$
    which concludes the proof since $\Phi(N)\rightarrow 0$ as $N\rightarrow \infty$ by the strong compactness of $\{\theta^\nu_0\}_\nu$ in $\dot H^{-\alpha}(\T^2)$.
\end{proof}

\begin{remark}
    \label{R:initial strong compactness}
    Without the strong compactness of the initial data Proposition \ref{P:no instant dissipation} fails. We give an example on the whole space $\R^2$. For a nontrivial radial and average-free $\theta_0\in C^\infty_c(\R^2)$, solve $\partial_t \theta =\Delta \theta$. Then 
    $$
    \theta^\nu(x,t):=\frac{1}{\nu^{\frac{1+\alpha}{\gamma}}}\theta\left(\frac{x}{\nu^{\frac{1}{\gamma}}},\frac{t}{\nu}\right)
    $$
    solves $\partial_t \theta^\nu =-\nu (-\Delta)^\gamma \theta^\nu$ with initial datum  
    $$
    \theta_0^\nu (x) :=\frac{1}{\nu^{\frac{1+\alpha}{\gamma}}}\theta_0\left(\frac{x}{\nu^{\frac{1}{\gamma}}}\right).
    $$
    Since $\theta^\nu$ is radially symmetric, the nonlinear term vanishes and $\theta^\nu$ is a solution to \eqref{g-SQG diss} with $f^\nu=0$. Moreover 
    $$
    \nu\int_0^\nu \|\theta^\nu(\tau)\|_{\dot H^{\gamma -\alpha}}^2\,d\tau= \int_0^1 \|\theta(\tau)\|^2_{\dot H^{\gamma -\alpha}}\,d\tau\qquad \text{for all } \nu>0.
    $$
    It follows that the dissipation of the Hamiltonian is of order $1$ in any time interval $(0,\delta)$. A direct inspection shows that indeed the sequence $\{\theta^\nu_0\}_\nu$ develops atomic concentration in $\dot H^{-\alpha}(\R^2)$ and therefore it is not strongly compact. For any $s\in \R$ we compute
\begin{equation}\label{compute norm rescaling}
    \int_0^T \|\theta^\nu(t)\|^2_{\dot H^s}\,dt =\nu^{1-\frac{2}{\gamma}(\alpha+s)}\int_0^{\frac{T}{\nu}}\|\theta(t)\|^2_{\dot H^s}\,dt.
\end{equation}
    Therefore $\theta^\nu\rightarrow 0$ in $L^2([0,T];\dot H^s(\R^2))$ if $s<-\alpha +\frac{\gamma}{2}$. Thus, the strong $L^2([0,T];\dot H^{-\alpha}(\R^2))$ compactness of the sequence of solutions is not enough to prevent the dissipation to concentrate at the initial time.
\end{remark}

\begin{remark}\label{R: decessity of delta positive}
    The sequence constructed in Remark \ref{R:initial strong compactness} also shows that, at the least on $\R^2$, the higher-order bound \eqref{hob} cannot generally hold for $\delta=0$. Indeed, by \eqref{compute norm rescaling} for $s=\gamma$ we get
    \begin{equation}\label{unbounded}
\nu^{\frac{\alpha+\gamma}{\gamma}}\int_0^T \|\theta^\nu(t)\|^2_{\dot H^\gamma}\,dt = \nu^{-\frac{\alpha}{\gamma}}\int_0^{\frac{T}{\nu}}\|\theta(t)\|^2_{\dot H^\gamma}\,dt,
        \end{equation}
    which blows up as $\nu\rightarrow 0^+$. Since there is a  bit of room to make \eqref{unbounded} unbounded, the example can be easily adjusted\footnote{Multiply $\theta^\nu$ by $\nu^\kappa$ for $ \kappa\in (0,\frac{\alpha}{2\gamma})$.} to also have $\theta^\nu_0\rightarrow 0$ in $\dot H^{-\alpha}(\R^2)$. The reason why \eqref{hob} is still valid for $\delta>0$ is due to the fact that 
    $$
    \int_{\frac{\delta}{\nu}}^{\frac{T}{\nu}} \|\theta(t)\|^2_{\dot H^\gamma}\,dt
    $$
    vanishes pretty fast as $\nu\rightarrow 0^+$ since $\theta$ is a solution of the fractional heat equation with a zero-average smooth initial datum.
\end{remark}

We are now ready to prove our first main theorem.
\begin{proof}[Proof of Theorem \ref{T:main diss}]
    Let $\eps>0$ and $\nu\in (0,1)$. Note that the assumptions we are making in $(ii)$ on the sequence of forcing are always sufficient to deduce the validity of \eqref{weak comp of force}. Then, since we are assuming $\{\theta^\nu_0\}_\nu\subset \dot H^{-\alpha}(\T^2)$ to be strongly compact, by Proposition \ref{P:no instant dissipation} we find $\delta>0$ such that 
    \begin{equation}
        \label{dissipation small for small times}
        \sup_{\nu \in (0,1)} \nu\int_0^\delta \| \theta^\nu (\tau) \|_{\dot H^{\gamma - \alpha}}^{2} \, d\tau<\eps.
    \end{equation}
    Therefore 
    \begin{equation}
        \label{split diss for short and positive times}
         \nu\int_0^T \| \theta^\nu (\tau) \|_{\dot H^{\gamma - \alpha}}^{2} \, d\tau<\eps +  \nu\int_\delta^T \| \theta^\nu (\tau) \|_{\dot H^{\gamma - \alpha}}^{2} \, d\tau.
    \end{equation}
    We now have to estimate the last term in the latter inequality. With the notation from \eqref{freq cuts}, for any $N>0$, we split it as 
    \begin{equation}\label{dissipation split low and high freq}
    \nu \int_\delta^T \| \theta^\nu (\tau) \|_{\dot H^{\gamma - \alpha}}^{2} \, d\tau=  \nu  \int_\delta^T \| \theta_{\leq N}^\nu (\tau) \|_{\dot H^{\gamma - \alpha}}^{2} \, d\tau+ \nu \int_\delta^T \| \theta_{>N}^\nu (\tau) \|_{\dot H^{\gamma - \alpha}}^{2} \, d\tau.
        \end{equation}
    By Remark \ref{R:uniform hamilt bound} we estimate the low-frequency term as
    \begin{equation}\label{est ugly 3}
     \int_\delta^T \| \theta_{\leq N}^\nu (\tau) \|_{\dot H^{\gamma - \alpha}}^{2} \, d\tau\leq N^{2\gamma} \int_\delta^T \| \theta^\nu (\tau) \|_{\dot H^{- \alpha}}^{2} \, d\tau\leq C  N^{2\gamma} \sup_{\nu \in (0,1)}\|\theta^\nu_0\|^2_{\dot H^{-\alpha}},
        \end{equation}
    while we interpolate the high-frequency one to get 
    \begin{align}
        \int_\delta^T \| \theta_{>N}^\nu (\tau) \|_{\dot H^{\gamma - \alpha}}^{2} \, d\tau&\leq C \int_\delta^T\| \theta^\nu_{>N} (\tau)\|^{2\frac{\alpha}{\alpha+\gamma}}_{\dot H^{- \alpha}}  \| \theta^\nu (\tau)\|^{2\frac{\gamma}{\alpha+\gamma}}_{\dot H^{ \gamma}}\,d\tau\\
        &\leq C \left(\int_\delta^T\| \theta^\nu_{>N} (\tau)\|^{2}_{\dot H^{- \alpha}} \,d\tau\right)^\frac{\alpha}{\alpha+\gamma} \left(\int_\delta^T\| \theta^\nu (\tau)\|^{2}_{\dot H^{ \gamma}} \,d\tau\right)^\frac{\gamma}{\alpha+\gamma},\label{est ugly 1}
    \end{align}
    where the last bound follows by the H\"older inequality. Recall that we are considering a sequence $\{\theta^\nu\}_\nu$ of solutions to \eqref{g-SQG diss} satisfying the higher-order bound \eqref{hob}. In particular, since we are assuming $\{f^\nu\}_\nu$ to be bounded in $L^{\frac{\alpha+\gamma}{\gamma}}([0,T];\dot H^{-\alpha}(\T^2))$ and $L^\infty([0,T];\dot H^{-\gamma}(\T^2))$ if $\alpha\leq \gamma$ and $\alpha>\gamma$ respectively, by $(c)$ from Proposition \ref{P:leray exist} we deduce 
    \begin{equation}
        \label{hob uniform} 
        \sup_{\nu \in (0,1)} \nu^{\frac{\alpha+\gamma}{\gamma}} \int^T_\delta \|\theta^\nu(\tau)\|_{\dot H^{\gamma}}^2 \, d\tau<\infty.
    \end{equation}
Therefore, \eqref{est ugly 1} becomes 
    \begin{equation}\label{est ugly 2}
         \int_\delta^T \| \theta_{>N}^\nu (\tau) \|_{\dot H^{\gamma - \alpha}}^{2} \, d\tau \leq C \nu^{-1} \left(\int_\delta^T\| \theta^\nu_{>N} (\tau)\|^{2}_{\dot H^{- \alpha}} \,d\tau\right)^\frac{\alpha}{\alpha+\gamma}.
    \end{equation}
    Summing up, by plugging \eqref{est ugly 3} and \eqref{est ugly 2} into \eqref{dissipation split low and high freq}  we achieve 
    \begin{equation}
        \label{est ugly 4}
         \nu \int_\delta^T \| \theta^\nu (\tau) \|_{\dot H^{\gamma - \alpha}}^{2} \, dt\leq C \left( \nu N^{2\gamma}+ \left(\int_\delta^T\| \theta^\nu_{>N} (\tau)\|^{2}_{\dot H^{- \alpha}} \,d\tau\right)^\frac{\alpha}{\alpha+\gamma} \right)\qquad \forall N>0,
    \end{equation}
    for a constant $C<\infty$ which does not depend on either $N$ or $\nu$.  Now, if $\{\theta^\nu\}_\nu$ is strongly compact in $L^2([0,T];\dot H^{- \alpha} (\T^2))$ we have 
    $$
    \lim_{N\rightarrow \infty}\sup_{\nu \in (0,1)} \int_\delta^T\| \theta^\nu_{>N} (\tau)\|^{2}_{\dot H^{- \alpha}} \,d\tau=0.
    $$
    Therefore, by first letting $\nu\rightarrow 0^+$ in \eqref{est ugly 4}  and then $N\rightarrow \infty$, we deduce 
    $$
    \lim_{\nu\rightarrow 0^+}   \nu \int_\delta^T \| \theta^\nu (\tau) \|_{\dot H^{\gamma - \alpha}}^{2} \, d\tau=0.
    $$
    The proof is concluded by letting $\nu\rightarrow 0^+$ in \eqref{split diss for short and positive times} and by the arbitrariness of  $\eps>0$.
\end{proof}

We conclude this section by giving an improved version of \cref{T:main diss}, identifying the relevant scales at which the strong compactness becomes effective on the dissipation. For scaling reasons due to the dissipative term $\nu(-\Delta)^\gamma$, this corresponds to length scales $\ell_\nu\sim \nu^{2\gamma}$. In terms of  frequencies this is $N_\nu\sim \nu^{-\frac{1}{2\gamma}}$. In fact, compactness at these frequencies becomes equivalent to no anomalous dissipation of the Hamiltonian. The terminology ``compactness at a given frequency'' is motivated by the fact that a bounded sequence $\{g^j\}_j\subset \ham(\T^2)$ is strongly compact in $\dot H^{-\alpha}(\T^2)$ if and only if 
$$
\lim_{N\rightarrow \infty} \sup_{j\geq 1}\|g^j_{>N}\|_{\dot H^{-\alpha}}=0,
$$
where we are using the notation from \eqref{freq cuts} to denote the Fourier projection on high frequencies. The theorem below is a generalization of \cite{DLP25}*{Theorem 3.3}.

\begin{theorem}[Frequency-dependent version]
    \label{T:compact and diss at frequencies}
   Let $\alpha\in(0,1]$ and $\gamma \in (0,\infty)$ be arbitrary. Let $\{\theta_0^{\nu}\}_\nu$ be a sequence of zero-average initial data and $\{f^\nu\}_\nu$ be a sequence of external forcing with zero spatial average for almost all times. Assume that 
\begin{itemize}
    \item[$(I)$] $\{\theta_0^{\nu}\}_\nu$ is strongly compact in $\dot H^{-\alpha}(\T^2)$;
    \item[$(II)$] $\{f^{\nu}\}_\nu$ stays bounded in $L^{\frac{\alpha+\gamma}{\gamma}}([0,T];\dot H^{-\alpha}(\T^2))$ if $\alpha\leq \gamma$, while $\{f^{\nu}\}_\nu$ is bounded in $L^\infty([0,T];\dot H^{-\gamma}(\T^2)) $ if $\alpha>\gamma$.
\end{itemize}
Let $\{\theta^\nu\}_{\nu}$ be any sequence of the weak solutions to \eqref{g-SQG diss} constructed in Proposition \ref{P:leray exist}. Then, setting $N_\nu:=\nu^{-\frac{1}{2\gamma}}$, it holds
\begin{equation}\label{equivalence}
    \lim_{\nu\rightarrow 0^+} \int_0^T \|\theta^\nu_{>\lambda  N_\nu} (\tau)\|^2_{\dot H^{-\alpha}} \,d\tau =0\quad \forall \lambda>0\quad \Longleftrightarrow\quad \lim_{\nu\rightarrow 0^+} \nu \int_0^T\|\theta^\nu(\tau)\|^2_{\dot H^{\gamma-\alpha}}\, d\tau=0.
       \end{equation}
\end{theorem}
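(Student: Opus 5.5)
We prove the two implications separately. The forward one refines the proof of Theorem \ref{T:main diss}, and the backward one is a direct frequency-localized estimate.

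\emph{($\Rightarrow$).} Fix $\eps>0$. Assumptions $(I)$ and $(II)$ imply \eqref{weak comp of force}, so Proposition \ref{P:no instant dissipation} gives $\delta>0$ with the dissipation on $(0,\delta)$ smaller than $\eps$ uniformly in $\nu$. On $(\delta,T)$ we repeat the splitting \eqref{dissipation split low and high freq}, but with the $\nu$-dependent cutoff $N=\lambda N_\nu$ in place of a fixed $N$. The low frequencies are bounded as in \eqref{est ugly 3} by
\[
C\nu(\lambda N_\nu)^{2\gamma}=C\lambda^{2\gamma}.
\]
The high frequencies are handled by interpolation, H\"older and the higher-order bound \eqref{hob} from Proposition \ref{P:leray exist}$(c)$, exactly as in \eqref{est ugly 2}. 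This yields
\[
\nu\int_\delta^T\|\theta^\nu(\tau)\|^2_{\dot H^{\gamma-\alpha}}\,d\tau\le C\left(\lambda^{2\gamma}+\Big(\int_0^T\|\theta^\nu_{>\lambda N_\nu}(\tau)\|^2_{\dot H^{-\alpha}}\,d\tau\Big)^{\frac{\alpha}{\alpha+\gamma}}\right),
\]
with $C$ independent of $\nu$ and $\lambda$. Letting $\nu\to0^+$ kills the second term by hypothesis. We then let $\lambda\to0^+$ and finally $\eps\to0^+$.

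\emph{($\Leftarrow$).} On frequencies $|n|>\lambda N_\nu$ we have the pointwise Fourier inequality
\[
|n|^{-2\alpha}\le (\lambda N_\nu)^{-2\gamma}|n|^{2(\gamma-\alpha)}=\lambda^{-2\gamma}\nu|n|^{2(\gamma-\alpha)}.
\]
Summing over these frequencies and integrating in time gives
\[
\int_0^T\|\theta^\nu_{>\lambda N_\nu}(\tau)\|^2_{\dot H^{-\alpha}}\,d\tau\le\lambda^{-2\gamma}\,\nu\int_0^T\|\theta^\nu(\tau)\|^2_{\dot H^{\gamma-\alpha}}\,d\tau.
\]
For fixed $\lambda>0$ the right-hand side tends to zero by hypothesis, which proves the implication. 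This direction needs neither $(I)$ nor $(II)$.

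The only delicate point is that the constant in front of the high-frequency term must be uniform in $\nu$ once $N$ depends on $\nu$. This holds because \eqref{hob} is uniform in $\nu\in(0,1)$ under $(II)$, and the interpolation constant does not depend on $N$. The price is the scale-invariant low-frequency contribution $\lambda^{2\gamma}$, which is why the hypothesis is required for every $\lambda>0$ and not just one.
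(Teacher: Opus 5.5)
Your proposal is correct and follows the paper's proof essentially step by step. For ($\Rightarrow$) you use Proposition \ref{P:no instant dissipation} on $(0,\delta)$, and on $(\delta,T)$ the low/high frequency splitting with the $\nu$-dependent cutoff $N=\lambda N_\nu$ combined with the higher-order bound; for ($\Leftarrow$) you use the same Fourier inequality $\int_0^T\|\theta^\nu_{>N}\|^2_{\dot H^{-\alpha}}\,d\tau\le N^{-2\gamma}\int_0^T\|\theta^\nu\|^2_{\dot H^{\gamma-\alpha}}\,d\tau$ with $N=\lambda N_\nu$. Your explicit order of limits ($\nu\to0^+$, then $\lambda\to0^+$, then $\eps\to0^+$) is the right way to read the paper's ``arbitrariness of $\eps,\lambda$'', because the constant $C$ depends on $\delta=\delta(\eps)$ through the higher-order bound.
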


\begin{proof}
  We start by proving the left-to-right implication. Let $\eps,\lambda>0$. By Proposition \ref{P:no instant dissipation} we find $\delta>0$ such that 
    \begin{equation}
        \label{dissipation small for small times 2}
        \sup_{\nu \in (0,1)} \nu\int_0^\delta \| \theta^\nu (\tau) \|_{\dot H^{\gamma - \alpha}}^{2} \, d\tau\leq \eps.
    \end{equation}
    Choose $N=\lambda N_\nu$ in \eqref{est ugly 4} to get 
  $$
   \nu \int_\delta^T \| \theta^\nu (\tau) \|_{\dot H^{\gamma - \alpha}}^{2} \, d\tau\leq C \left( \lambda^{2\gamma}+ \left(\int_\delta^T\| \theta^\nu_{> \lambda N_\nu} (\tau)\|^{2}_{\dot H^{- \alpha}} \,d\tau\right)^\frac{\alpha}{\alpha+\gamma} \right)\qquad \forall \nu \in (0,1).
  $$
  Then, by splitting the dissipation into times $(0,\delta)$ and $(\delta,T)$, our assumption shows 
  $$
  \limsup_{\nu\rightarrow 0^+} \nu \int_0^T \| \theta^\nu (\tau) \|_{\dot H^{\gamma - \alpha}}^{2} \, d\tau\leq \eps+ C \lambda^{2\gamma},
  $$
 which then must necessarily vanish by the arbitrariness of  $\eps,\lambda>0$.  The proof of the right-to-left implication is a direct consequence of 
   $$
   \int_0^T \| \theta^\nu_{>N} (\tau)\|^{2}_{\dot H^{- \alpha}} \,d\tau \leq N^{-2\gamma}  \int_0^T\|\theta^\nu(\tau)\|^2_{\dot H^{\gamma- \alpha}}\, d\tau \qquad \forall N,\,\nu>0.
   $$
\end{proof}

\section{Global existence}\label{S:global existence}

This section is dedicated to the proof of Theorem \ref{T:global-existence}. Let $\{\theta^\nu_0\}_\nu$ and $\{f^\nu\}_\nu$ be  smooth approximations of $\theta_0\in L^{p_\alpha} (\T^2)$ and $f\in L^1_{\rm loc}([0,\infty);L^{p_\alpha}(\T^2))$ respectively, converging strongly in their own norms. In order to construct a global-in-time weak solution to \eqref{g-SQG} we consider its viscous regularization \eqref{g-SQG diss} for a large enough\footnote{In view of \cite{MX12}, it is enough to take any $\gamma\geq 1-\alpha$.} power $\gamma$ of the Laplacian that guarantees global well-posedness. Then, by standard estimates on transport-diffusion equation with a divergence-free vector field (see Proposition \ref{P:adv diff Lp bound} below) we get
$$
\|\theta^\nu(t)\|_{L^{p}}\leq \|\theta^\nu_0\|_{L^p}+\int_0^t \|f^\nu(\tau)\|_{L^{p}}\,d\tau \qquad \forall t>0, \, p\in [1,\infty].
$$
As $\nu\rightarrow 0^+$, the right-hand-side stays bounded for $p=p_\alpha$. It follows that $\{\theta^\nu\}_\nu$ is bounded in $L^\infty_{\rm loc}([0,\infty);L^{p_\alpha}(\T^2))$. Recall that the value of $p_\alpha=\frac{2}{1+\alpha}$ is critical, i.e. the embedding $L^{p_\alpha}(\T^2)\subset\dot H^{-\alpha}(\T^2)$ is only continuous. Therefore, although this allows to extract a subsequence $\theta^\nu\overset{*}{\rightharpoonup}\theta$ in $L^\infty_{\rm loc}([0,\infty);\dot H^{-\alpha}(\T^2))$, this is not enough to pass to the limit into the nonlinear term which, in view of Remark \ref{R:nonlinearity continuous}, requires the strong compactness in the latter space. To overcome this issue we leverage  on the strong compactness of $\{\theta^\nu_0\}_\nu$ and $\{f^\nu\}_\nu$ in $L^{p_\alpha}(\T^2)$ and $L^1_{\rm loc}([0,\infty);L^{p_\alpha}(\T^2))$ respectively. This allows to split the sequence $\{\theta^\nu\}_\nu$ into an  ``arbitrarily small'' part and a ``regular'' one. More precisely, for any $\eps>0$, we will provide a decomposition 
$$
\theta^\nu=\theta^{\nu,\eps}_{\rm smll}+ \theta^{\nu,\eps}_{\rm reg}
$$
 such that 
\begin{equation}\label{decomp sol}
\| \theta^{\nu,\eps}_{\rm smll} (t)\|_{L^{p_\alpha}}\leq \eps \qquad \text{and} \qquad \| \theta^{\nu,\eps}_{\rm reg} (t)\|_{L^{2}}\leq R_\eps\qquad \forall \nu>0, \, t\in [0,T],
\end{equation}
for a, maybe large, constant $R_\eps<\infty$. It is well-known that decompositions like \eqref{decomp sol} are related to equi-integrability. See Remark \ref{R: concentration comp} below and also \cites{BGM25,BCC22} where the same idea has been used for similar purposes. Although it certainly does not imply strong compactness of $\{\theta^\nu\}_\nu$ in $L^\infty_{\rm loc}([0,\infty);L^{p_\alpha}(\T^2))$, such decomposition is enough for the one in $L^\infty_{\rm loc}([0,\infty);\dot H^{-\alpha}(\T^2))$ as shown in the next lemma. The lemma will be then applied to $\{\theta^\nu(t)\}_\nu$ for all $t\geq 0$.

\begin{lemma}[Concentration compactness]\label{InterpolatoryLem}
    Let $\alpha\in (0,1)$ and $p_\alpha:=\frac{2}{1+\alpha}$. Let $\{g^j\}_{j}$ be such that $g^j\overset{*}{\rightharpoonup} g$ in $\dot H^{-\alpha}(\T^2)$. Assume that, for every $\varepsilon > 0$, there exists a decomposition $g^j = g^{j,\eps}_{\rm smll} +  g^{j,\eps}_{\rm reg}$ and a constant $R_\eps<\infty$ such that 
    \begin{equation}\label{split small plus reg}
        \| g^{j,\eps}_{\rm smll} \|_{L^{p_\alpha}} \leq \varepsilon \qquad \text{and}\qquad 
        \| g^{j,\eps}_{\rm reg} \|_{L^2} \leq R_\eps \qquad \forall j\geq 1.
    \end{equation}
    Then $g^j\rightarrow g$ in $\dot H^{-\alpha}(\T^2)$.
\end{lemma}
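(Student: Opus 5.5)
The plan is to prove that $\{g^j\}_j$ is strongly compact in $\dot H^{-\alpha}(\T^2)$ by showing that high frequencies are uniformly small:
$$
\lim_{N\to\infty}\sup_{j\geq 1}\|g^j_{>N}\|_{\dot H^{-\alpha}}=0.
$$
Granting this, weak-$*$ convergence upgrades to strong convergence in the standard way. The low-frequency parts $g^j_{\leq N}$ live in a finite-dimensional space, so they converge strongly to $g_{\leq N}$. Lower semicontinuity gives $\|g_{>N}\|_{\dot H^{-\alpha}}\leq \liminf_j \|g^j_{>N}\|_{\dot H^{-\alpha}}$. Hence
$$
\limsup_j \|g^j-g\|_{\dot H^{-\alpha}}\leq 2\sup_j\|g^j_{>N}\|_{\dot H^{-\alpha}},
$$
which vanishes as $N\to\infty$.

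To control the high frequencies, fix $\eps>0$ and use the decomposition \eqref{split small plus reg}. The regular part is handled by the trivial Fourier bound
$$
\|(g^{j,\eps}_{\rm reg})_{>N}\|_{\dot H^{-\alpha}}\leq N^{-\alpha}\|g^{j,\eps}_{\rm reg}\|_{L^2}\leq N^{-\alpha}R_\eps.
$$
For the small part, the projection $P_{>N}$ is an orthogonal projection on $\dot H^{-\alpha}$ and hence has norm at most one. Combining this with the (continuous) Sobolev embedding $L^{p_\alpha}(\T^2)\subset \dot H^{-\alpha}(\T^2)$, which is the dual of $\dot H^{\alpha}\subset L^{p_\alpha'}$ with $p_\alpha'=\frac{2}{1-\alpha}$ for zero-average functions, gives
$$
\|(g^{j,\eps}_{\rm smll})_{>N}\|_{\dot H^{-\alpha}}\leq \|g^{j,\eps}_{\rm smll}\|_{\dot H^{-\alpha}}\leq C\|g^{j,\eps}_{\rm smll}\|_{L^{p_\alpha}}\leq C\eps,
$$
where $C$ depends only on $\alpha$. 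Summing the two estimates yields
$$
\sup_j \|g^j_{>N}\|_{\dot H^{-\alpha}}\leq C\eps+N^{-\alpha}R_\eps.
$$
Letting first $N\to\infty$ and then $\eps\to0$ gives the claim.

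The main point needing care is the embedding $L^{p_\alpha}\subset\dot H^{-\alpha}$ for possibly nonzero-average pieces. The pieces $g^{j,\eps}_{\rm smll}$ and $g^{j,\eps}_{\rm reg}$ need not have zero average even when $g^j$ does. This is harmless, because only high-frequency projections enter the argument, and $P_{>N}$ kills the mean for $N\geq1$. Alternatively, one may subtract the averages, which are bounded by $\eps$ and by $R_\eps$ respectively. Concretely, I would apply the embedding to $g^{j,\eps}_{\rm smll}-\langle g^{j,\eps}_{\rm smll},1\rangle$, whose $L^{p_\alpha}$ norm is at most $2\eps$. The rest is routine. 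The restriction $\alpha<1$ ensures $p_\alpha'<\infty$, so the Sobolev embedding holds.
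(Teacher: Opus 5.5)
Your proof is correct and follows essentially the same route as the paper. Both reduce to uniform smallness of the high-frequency tails $\sup_j\|g^j_{>N}\|_{\dot H^{-\alpha}}$, bound the regular piece by $N^{-\alpha}R_\eps$ and the small piece by $C\eps$ via the continuous embedding $L^{p_\alpha}(\T^2)\subset \dot H^{-\alpha}(\T^2)$, and then let $N\to\infty$ followed by $\eps\to 0$. You additionally spell out two points the paper leaves implicit: why tail smallness combined with weak-$*$ convergence upgrades to strong convergence, and why the possibly nonzero averages of the two pieces cause no trouble.
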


\begin{proof}
    In the notation \eqref{freq cuts}, the claimed strong convergence is equivalent to 
    \begin{equation}
        \label{gn small on high freq}
        \lim_{N\rightarrow \infty}\sup_{j\geq 1}\|g^j_{>N}\|_{\dot H^{-\alpha}}=0.
    \end{equation}
    Let $\eps>0$. By the decomposition assumption and the continuity of the embedding $L^{p_\alpha}(\T^2)\subset \dot H^{-\alpha}(\T^2)$ we have
    \begin{align}
        \|g^j_{>N}\|_{\dot H^{-\alpha}}&\leq \|(g^{j,\eps}_{\rm smll})_{>N}\|_{\dot H^{-\alpha}} + \|(g^{j,\eps}_{\rm reg})_{>N}\|_{\dot H^{-\alpha}}\\
        &\leq  \| g^{j,\eps}_{\rm smll} \|_{\dot H^{-\alpha}} + N^{-\alpha} \| g^{j,\eps}_{\rm reg} \|_{L^2}\\
         &\leq  C \| g^{j,\eps}_{\rm smll} \|_{L^{p_\alpha}} + N^{-\alpha}R_\eps\\
         &\leq  C \eps + N^{-\alpha} R_\eps.
    \end{align}
    Therefore 
    $$
    \limsup_{N\rightarrow \infty} \sup_{j\geq 1}\|g^j_{>N}\|_{\dot H^{-\alpha}}\leq C \eps,
    $$
    from which \eqref{gn small on high freq} follows by the arbitrariness of $\eps>0$.
\end{proof}

\begin{remark}\label{R: concentration comp}
    Lemma \ref{InterpolatoryLem} is a particular case of the more general ``concentration compactness principle'' à la Lions \cites{PLL1,PLL2}. The concentration compactness principle guarantees the strong compactness of $\{g^j\}_j$ in $\dot H^{-\alpha}(\T^2)$ as soon as $\{|g^j|^{p_\alpha}\}_j$ does not generate atomic concentrations in $L^1(\T^2)$. Since $p_\alpha<2$ for any $\alpha\in (0,1)$, one can check that the assumption \eqref{split small plus reg} implies that $\{|g^j|^{p_\alpha}\}_j$ is weakly compact in $L^1(\T^2)$, and therefore no concentration can occur. However, although excluding atomic concentrations would be enough, there is no currently known way to propagate the nonatomic condition from the initial data. It is for this reason that we are working under the stronger assumption \eqref{split small plus reg} which, as we will see below in the proof of Theorem \ref{T:global-existence}, can be shown to propagate from  $\theta_0$, uniformly in time.
\end{remark}

\begin{remark}
    \label{R: no subseq}
    We emphasize that Lemma \ref{InterpolatoryLem} proves that the whole sequence converges strongly. This is important as Lemma \ref{InterpolatoryLem} will be applied to  $\{\theta^\nu(t)\}_\nu$ for all $t\geq 0$ and, therefore, passing to a different subsequence depending on each time $t$ would be a problem. 
\end{remark}

We are now ready to prove our second, and last, main theorem. 

\begin{proof}[Proof of Theorem \ref{T:global-existence}]
We break the proof down into steps.

\underline{\textsc{Step 1}}: Regularization. 

We will regularize \eqref{g-SQG} by adding viscosity and mollifying the data $\theta_0$ and $f$. More precisely, since $\theta_0\in L^{p_\alpha}(\T^2)$ and $f\in L^1_{\rm loc}([0,\infty);L^{p_\alpha}(\T^2))\cap L^{1+\alpha}_{\rm loc}([0,\infty);\dot H^{-\alpha}(\T^2))$, we find $\{\theta^\nu\}_\nu\subset C^\infty(\T^2)$ and $\{f^\nu\}_\nu\subset C^\infty(\T^2\times [0,\infty))$ such that 
\begin{align}
    \|\theta^\nu_0\|_{L^{p_\alpha}}&\leq  \|\theta_0\|_{L^{p_\alpha}}, \label{unif Lp bound data}\\
    \int_0^T \|f^\nu(t)\|^{1+\alpha}_{\dot H^{-\alpha}}\,dt & \leq  \int_0^T \|f(t)\|^{1+\alpha}_{\dot H^{-\alpha}}\,dt\label{unif ham bound force}
\end{align}
and 
\begin{equation}
    \label{unif Lp bound force}
     \int_0^T \|f^\nu(t)\|_{L^{p_\alpha}}\,dt \leq  \int_0^T \|f(t)\|_{L^{p_\alpha}}\,dt
\end{equation}
for all $\nu>0$ and $T<\infty$. Moreover
\begin{equation}
    \label{in data conv strong Lp}
    \theta^\nu_0\rightarrow \theta_0 \qquad \text{in } L^{p_\alpha}(\T^2)
\end{equation}
and 
\begin{equation}
    \label{force conv strong Lp}
    f^\nu\rightarrow f \qquad \text{in } L^1_{\rm loc}([0,\infty);L^{p_\alpha}(\T^2)).
\end{equation}
For a sequence $\nu\rightarrow 0^+$, let $\{\theta^\nu\}_\nu$ be the unique smooth solutions\footnote{By \cite{MX12}, for any smooth initial datum and with no external forcing, the system \eqref{g-SQG diss} admits a unique global-in-time smooth solution for $\gamma=1-\alpha$. In particular, the same is true for any $\gamma\geq 1-\alpha$. A smooth external body force can be incorporated in the energy estimates argument from \cite{MX12}.} to \eqref{g-SQG diss} for a given $\gamma\in [1-\alpha,1]$. For simplicity, let us fix $\gamma=1$. By smoothness and Proposition \ref{P:adv diff Lp bound} we deduce 
\begin{equation}
    \label{ham cons in proof 2}
    \frac12  \|\theta^\nu(t)\|^2_{\dot H^{-\alpha} } + \nu \int_{0}^t\|\theta^\nu(\tau)\|^2_{\dot H^{1-\alpha}}\,d\tau =   \frac12  \|\theta^\nu_0\|^2_{\dot H^{-\alpha} } + \int_0^t\langle f^\nu (\tau),\theta^\nu(\tau)\rangle_{\dot H^{-\alpha}}\,d\tau
\end{equation}
and
\begin{equation}
    \label{Lp bound unif in time}
     \|\theta^\nu(t)\|_{L^{p_\alpha} } \leq   \|\theta^\nu_0\|_{L^{p_\alpha} }+\int_0^t\|f^\nu(\tau)\|_{L^{p_\alpha}}\,d\tau
\end{equation}
for all $\nu,t>0$. Note that, as a direct consequence of \eqref{unif Lp bound data}, \eqref{unif Lp bound force} and \eqref{Lp bound unif in time} we get
\begin{equation}
    \label{sequence of sol bound Lp}
    \{\theta^\nu\}_\nu\subset L^\infty_{\rm loc}([0,\infty); L^{p_\alpha}(\T^2))\qquad \text{is bounded}.
\end{equation}

\underline{\textsc{Step 2}}: Decomposition of $\theta^\nu$. 

We want to provide a decomposition of $\theta^\nu$ as in \eqref{decomp sol}. To do that, let $\eps>0$ and $T<\infty$. By \eqref{in data conv strong Lp} and \eqref{force conv strong Lp},  via a suitable regularization\footnote{A sequence $\{g^j\}_j\subset L^p$ is strongly compact if and only if $\sup_j \|g^j - g^j*\rho_\eps\|_{L^p}\rightarrow 0$ as $\eps\rightarrow 0^+$. Moreover, to ensure \eqref{decomp force}, the regularization has to depend on $T$ as well. This was not explicitly written so as not to make the notation too heavy.} of $\theta^\nu_0$ and $f^\nu$, we find $\theta^{\nu,\eps}_0$ and $f^{\nu,\eps}$ such that 
\begin{equation}
    \label{decomp init data}
    \|\theta^\nu_0 - \theta^{\nu,\eps}_0\|_{L^{p_\alpha}}\leq \frac{\eps}{2}\qquad \text{and} \qquad  \| \theta^{\nu,\eps}_0\|_{L^{2}}\leq \frac{R_\eps}{2}\qquad \forall \nu>0
\end{equation}
and 
\begin{equation}
    \label{decomp force}
    \int_0^T\|(f^{\nu} - f^{\nu,\eps})(t)\|_{L^{p_\alpha}}\,dt\leq \frac{\eps}{2}\qquad \text{and} \qquad   \int_0^T\|f^{\nu,\eps}(t)\|_{L^{2}}\,dt\leq \frac{R_\eps}{2}\qquad \forall \nu>0,
\end{equation}
for a sufficiently large constant $R_\eps<\infty$. Therefore, by setting $\theta^{\nu,\eps}_{0,\rm smll}:=\theta^{\nu}_{0}- \theta^{\nu,\eps}_{0}$ and  $\theta^{\nu,\eps}_{0,\rm reg}:= \theta^{\nu,\eps}_{0}$, and similarly for $f^\nu$, we have 
$$
\theta^\nu_0 = \theta^{\nu,\eps}_{0,\rm smll} + \theta^{\nu,\eps}_{0,\rm reg}\qquad \text{and}\qquad f^\nu = f^{\nu,\eps}_{\rm smll} + f^{\nu,\eps}_{\rm reg}.
$$
Consequently, we let $\theta^{\nu,\eps}_{\rm smll}$ and $\theta^{\nu,\eps}_{\rm reg}$ to be the unique smooth solutions to the linear systems
$$
\left\{\begin{array}{l}
\partial_t\theta^{\nu,\eps}_{\bullet} + u^\nu\cdot\nabla\theta^{\nu,\eps}_{\bullet} - \nu \Delta \theta^{\nu,\eps}_{\bullet} = f^{\nu,\eps}_{\bullet}\\
\theta^{\nu,\eps}_{\bullet}(\cdot,0)=\theta^{\nu,\eps}_{0,\bullet}
\end{array}\right. \qquad \bullet=\rm smll, \rm reg
$$
on $\T^2\times [0,\infty)$. By linearity, it must be $\theta^\nu = \theta^{\nu,\eps}_{\rm smll} +\theta^{\nu,\eps}_{\rm reg}$. By Proposition \ref{P:adv diff Lp bound} we deduce 
$$
    \|\theta^{\nu,\eps}_{\bullet}(t)\|_{L^{p} } \leq   \|\theta^{\nu,\eps}_{0,\bullet}\|_{L^{p} }+\int_0^t\|f^{\nu,\eps}_{\bullet}(\tau)\|_{L^{p}}\,d\tau \qquad \forall \nu, \,t>0, \, p\in [1,\infty],
$$
for both $\bullet = \rm smll, \rm reg$. Therefore, by \eqref{decomp init data} and \eqref{decomp force} we deduce
\begin{equation}
    \label{final decomp solut}
    \|\theta^{\nu,\eps}_{\rm smll}(t)\|_{L^{p_\alpha}}\leq \eps \qquad \text{and}\qquad  \|\theta^{\nu,\eps}_{\rm reg}(t)\|_{L^{2}}\leq R_\eps\qquad \forall \nu>0,\, t\in [0,T].
\end{equation}

\underline{\textsc{Step 3}}: Strong compactness. 

Let $p'_\alpha$ be the H\"older conjugate of $p_\alpha$. Let $T<\infty$ and $\psi \in L^{p_\alpha'}(\T^2)$. We claim that the sequence of functions 
$$
t\mapsto \int_{\T^2}\theta^\nu(x,t)\psi(x)\,dx
$$
is equi-continuous on $[0,T]$. First of all, by \eqref{sequence of sol bound Lp} it is enough to prove the claim for $\psi \in C^\infty (\T^2)$ by density. Therefore, let $\psi \in C^\infty (\T^2)$. We need to prove that, for any $\eps>0$ there exists a $\delta>0$ such that 
\begin{equation}
    \label{equicont claim}
    |t-s|<\delta \qquad \Longrightarrow \qquad \sup_{\nu>0}\left|\int_{\T^2} \left(\theta^\nu(x,t) - \theta^\nu(x,s)\right)\psi(x)\,dx\right| <\eps,
\end{equation}
for $t,s\in [0,T]$. Since $\theta^\nu$ is smooth, we can write 
$$
\theta^\nu(x,t) - \theta^\nu(x,s)=\int_s^t \partial_\tau \theta^\nu(x,\tau)\,d\tau,
$$
from which
$$
\int_{\T^2} \left(\theta^\nu(x,t) - \theta^\nu(x,s)\right)\psi(x)\,dx= \int_s^t\int_{\T^2} \left(\theta^\nu u^\nu\cdot \nabla \psi +\nu \theta^\nu\Delta \psi +f^\nu\psi\right)\,dxd\tau.
$$
Therefore, since  we can take $\nu<1$, by the identity \eqref{reassembling nonlinearity} and Proposition \ref{P:nonlin continuous} we deduce
\begin{align}
    \left|\int_{\T^2} \left(\theta^\nu(x,t) - \theta^\nu(x,s)\right)\psi(x)\,dx\right|&\leq C \int_s^t \left(\|\theta^\nu(\tau)\|^2_{\dot H^{-\alpha}} +\|\theta^\nu(\tau)\|_{\dot H^{-\alpha}} + \|f^\nu(\tau)\|_{\dot H^{-\alpha}} \right)\,d\tau\\
    &\leq C\left(|t-s| +\sup_{\nu>0}\int_s^t \|f^\nu(\tau)\|_{L^{p_\alpha}}\,d\tau\right),     \label{last estimate for equicont}
\end{align}
where to obtain the last inequality we have used \eqref{sequence of sol bound Lp}, since $t,s\in [0,T]$ and $T<\infty$. By \eqref{force conv strong Lp}, the last term in \eqref{last estimate for equicont} can be made arbitrarily small if $|t-s|\ll 1$, thereby proving \eqref{equicont claim}. The proof of the claim is concluded. 

The equi-continuity together with \eqref{sequence of sol bound Lp} allows to apply the  Ascoli--Arzelà compactness theorem\footnote{The domain $[0,\infty)$ is locally compact and separable while the target space $L^{p_\alpha}(\T^2)$ is compact when endowed with the weak topology.} to find an element $\theta\in C^0_{\rm w} ([0,\infty); L^{p_\alpha}(\T^2))$ such that 
\begin{equation}
    \label{asc arz}
    \lim_{\nu\rightarrow 0^+} \sup_{t\in [0,T]} \left|\int_{\T^2} \left(\theta^\nu(x,t)-\theta(x,t)\right)\psi (x)\,dx \right|=0\qquad \forall T<\infty, \,  \psi \in L^{p_\alpha'}(\T^2),
\end{equation}
up to a nonrelabelled subsequence. In particular 
\begin{equation}\label{weak Lp conv for all times}
    \theta^\nu(t)\rightharpoonup \theta(t) \qquad \text{in }  L^{p_\alpha}(\T^2),  \text{ for all } t\geq 0.
    \end{equation}
Since $ L^{p_\alpha}(\T^2)\subset  \dot H^{-\alpha}(\T^2)$, then $\theta^\nu(t)\overset{*}{\rightharpoonup}\theta(t)$ in $ \dot H^{-\alpha}(\T^2)$ as well. Therefore, by \eqref{final decomp solut} we can invoke Lemma \ref{InterpolatoryLem} (see also Remark \ref{R: no subseq}) to deduce 
\begin{equation}
    \label{strong con ham for all times}
    \theta^\nu(t)\rightarrow \theta(t) \qquad \text{in }  \dot H^{-\alpha}(\T^2),  \text{ for all } t\geq 0.
\end{equation}

\underline{\textsc{Step 4}}: Conclusion.

We are now ready to conclude the proof of the theorem by putting together all the above considerations. Note that $\{\theta^\nu\}_\nu$ stays bounded in $L^\infty_{\rm loc}([0,\infty);\dot H^{-\alpha}(\T^2))$ by \eqref{sequence of sol bound Lp}. Then, the pointwise-in-time convergence \eqref{strong con ham for all times} yields to $\theta^\nu\rightarrow \theta$ in $L^2_{\rm loc}([0,\infty);\dot H^{-\alpha}(\T^2))$ by the dominated convergence theorem. In view of Remark \ref{R:nonlinearity continuous}, this allows to pass to the limit into the nonlinearity. Together with \eqref{in data conv strong Lp} and \eqref{force conv strong Lp}, this shows that 
\begin{equation}\label{weak cont of sol}
\theta\in C^0_{\rm w} ([0,\infty); L^{p_\alpha}(\T^2)) \subset C^0_{\rm w} ([0,\infty); \dot H^{-\alpha}(\T^2))
\end{equation}
is a weak solution to \eqref{g-SQG} in the sense of Definition \ref{def.sqg-solutions}. The inequality \eqref{Lp inequality} follows by using \eqref{in data conv strong Lp}, \eqref{force conv strong Lp} and \eqref{weak Lp conv for all times} into \eqref{Lp bound unif in time} thanks to the lower semicontinuity of the $L^{p_\alpha}(\T^2)$ norm under weak convergence. Finally, by \eqref{in data conv strong Lp}, \eqref{force conv strong Lp}, \eqref{asc arz} and \eqref{strong con ham for all times} we have 
\begin{equation}
\|\theta^\nu(t)\|_{\dot H^{-\alpha}}\rightarrow \|\theta(t)\|_{\dot H^{-\alpha}}
\end{equation}
and 
$$
\int_0^t \langle \theta^\nu(\tau),f^\nu(\tau)\rangle_{\dot H^{-\alpha}}\,d\tau \rightarrow \int_0^t \langle \theta(\tau),f(\tau)\rangle_{\dot H^{-\alpha}}\,d\tau
$$
for all $t\geq 0$, with $\theta(0)=\theta_0$. Therefore, since $\{f^\nu\}_\nu\subset L^{1+\alpha}_{\rm loc}([0,\infty);\dot H^{-\alpha}(\T^2))$ stays bounded by \eqref{unif ham bound force}, we can let $\nu\rightarrow 0^+$ in \eqref{ham cons in proof 2} and deduce the validity of \eqref{hamiltonian balance} by Theorem \ref{T:main diss}. Note that we are allowed to use Theorem \ref{T:main diss} since we are considering the unique smooth solution to \eqref{g-SQG diss} for $\gamma=1$ which, as such, it must enjoy all the properties listed in Proposition \ref{P:leray exist}. To conclude, \eqref{hamiltonian balance} together with the weak continuity \eqref{weak cont of sol} yields to the strong continuity in time, i.e.  $\theta\in C^0 ([0,\infty); \dot H^{-\alpha}(\T^2))$.
\end{proof}

\begin{remark}\label{R:no L2 force}
    In the proof of Theorem \ref{T:global-existence}, the assumption $f\in L^{1+\alpha}_{\rm loc}([0,\infty);\dot H^{-\alpha}(\T^2))$ has only been used in the last Step 4 to be able to use Theorem \ref{T:main diss} in order to exclude anomalous dissipation and deduce the Hamiltonian balance \eqref{hamiltonian balance} at the inviscid level. Therefore, by following the very same proof, all the theses but \eqref{hamiltonian balance} in Theorem \ref{T:global-existence} stay true even without the assumption $f\in L^{1+\alpha}_{\rm loc}([0,\infty);\dot H^{-\alpha}(\T^2))$. It might be of course possible that, at least in the case $\gamma\geq \alpha$, Theorem \ref{T:main diss} is still valid under the weaker assumption that the forcing $\{f^\nu\}_\nu\subset L^1([0,T];\dot H^{-\alpha}(\T^2))$ is bounded. To do that, one should prove the same higher-order bound \eqref{hob} under this weaker integrability in time of the forcing. Although most of the apriori estimates obtained in the proof of Proposition \ref{P:leray exist} stay true for $f^\nu\in L^1([0,T];\dot H^{-\alpha}(\T^2))$, some others seem to be more involved\footnote{For instance, it is not clear to us how to deal with \eqref{estimate not generalized}.}. Therefore, it is not clear  whether Theorem \ref{T:main diss} is still true by weakening the, uniform-in-viscosity, time integrability of the forcing to $L^1([0,T])$. However, in view of these considerations, Theorem \ref{T:global-existence} should still be true, including the Hamiltonian balance \eqref{hamiltonian balance}, by removing the assumption $f\in L^{1+\alpha}_{\rm loc}([0,\infty);\dot H^{-\alpha}(\T^2))$. For instance, instead of the vanishing viscosity one, it should be enough to consider a regularization\footnote{For instance, by suitably regularizing the nonlinearity only.} of \eqref{g-SQG} that does not introduce a ``dissipative defect'' in \eqref{hamiltonian balance}. Since this goes somehow in a different direction with respect to the main spirit of the current paper, it wont be handled here.
\end{remark}

The next proposition is quite standard. We include the proof for the reader's convenience.

\begin{proposition}[$L^p$ bounds for transport-diffusion]
    \label{P:adv diff Lp bound}
   Let $u$ be a smooth divergence-free vector field. Let $\rho_0,f$ and $\rho$ be smooth and such that 
   $$
   \left\{\begin{array}{l}
\partial_t\rho + u\cdot\nabla\rho - \nu \Delta \rho= f\\
\rho(\cdot,0)=\rho_{0}
\end{array}\right. 
   $$
   on $\T^2\times [0,\infty)$. Then, for any $p\in [1,\infty]$, it holds 
   \begin{equation}\label{stab Lp est}
   \|\rho (t)\|_{L^p}\leq \|\rho_0\|_{L^p} +\int_0^t  \|f (\tau)\|_{L^p}\,d\tau \qquad \forall t\geq 0.
    \end{equation}
\end{proposition}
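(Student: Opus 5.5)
We prove \eqref{stab Lp est} by the classical energy method: first for $p\in[2,\infty)$ or more generally $p\in(1,\infty)$ through the time evolution of $\|\rho(t)\|_{L^p}^p$, and then for the endpoints $p=1$ and $p=\infty$ by a limiting argument. The structure is the usual one for scalar transport-diffusion. Since $\div u=0$, the transport term integrates to zero. The Laplacian has a sign when tested against $|\rho|^{p-2}\rho$. The force contributes only linearly after Hölder.

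\textbf{Step 1 (the case $1<p<\infty$).} Multiply the equation by $|\rho|^{p-2}\rho$ and integrate over $\T^2$. For $p\ge 2$ this is a $C^1$ function of $\rho$; for $1<p<2$ one can either use the regularized weight $(\rho^2+\eta)^{\frac{p-2}{2}}\rho$ and send $\eta\to0$, or simply deduce the case $1<p<2$ by interpolation and limits. The transport term equals $\int u\cdot\nabla \frac{|\rho|^p}{p}\,dx=0$ by incompressibility. The diffusion term equals $\nu(p-1)\int|\rho|^{p-2}|\nabla\rho|^2\,dx\ge0$, so it can be dropped. For the force, Hölder gives $\int f|\rho|^{p-2}\rho\le \|f\|_{L^p}\|\rho\|_{L^p}^{p-1}$. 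Altogether
$$
\frac{1}{p}\frac{d}{dt}\|\rho(t)\|_{L^p}^p\le \|f(t)\|_{L^p}\|\rho(t)\|_{L^p}^{p-1}.
$$
Formally this says $\frac{d}{dt}\|\rho\|_{L^p}\le\|f\|_{L^p}$. To avoid dividing by zero, apply the computation to $\big(\|\rho\|_{L^p}^p+\eta\big)^{1/p}$, or equivalently run the Grönwall-type argument already used in Remark \ref{R:uniform hamilt bound}, and then let $\eta\to0$. Integrating in time gives \eqref{stab Lp est}.

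\textbf{Step 2 (endpoints).} On the torus, smooth functions satisfy $\|g\|_{L^p}\to\|g\|_{L^\infty}$ as $p\to\infty$ and $\|g\|_{L^p}\to\|g\|_{L^1}$ as $p\to1^+$, for each fixed time. We pass to the limit on both sides of \eqref{stab Lp est}. For the time integral of the force, this is justified by dominated convergence, since $\|f(\tau)\|_{L^p}\le C\|f(\tau)\|_{L^\infty}$ uniformly for $p$ in compact ranges and $f$ is smooth. This yields the cases $p=1$ and $p=\infty$.

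The only delicate point is regularity of the weight $|\rho|^{p-2}\rho$ near $\{\rho=0\}$ when $p<2$, together with the positivity of the regularized diffusion term. The positivity is immediate once the regularization is written as the derivative of a convex function $\Phi_\eta(\rho)$: then $\int\Phi_\eta''(\rho)|\nabla\rho|^2\ge0$. Otherwise the argument is routine.
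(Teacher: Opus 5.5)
Your proof is correct and is essentially the paper's argument. Test the equation with $\beta'(\rho)$ for the (regularized) convex function $\beta=|\cdot|^p$, discard the nonnegative term $\nu\int\beta''(\rho)|\nabla\rho|^2\,dx$, apply H\"older to the forcing, integrate the resulting differential inequality, and reach $p=1,\infty$ by letting $p$ tend to the endpoints. Your $\Phi_\eta$ and $(\|\rho\|_{L^p}^p+\eta)^{1/p}$ regularizations make explicit the ``standard approximation argument'' and the comparison step that the paper leaves implicit. For $1<p<2$, keep the $\Phi_\eta$ route: the interpolation alternative would be circular, since you obtain $p=1$ as the limit $p\to1^+$.
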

\begin{proof}
    Let $\beta:\R\rightarrow \R$ be convex and smooth. Multiplying the equation by $\beta'(\rho)$ and integrating in the space variable we get 
    \begin{align}
    \frac{d}{dt}\int_{\T^2} \beta(\rho(x,t))\,dx&\leq  \frac{d}{dt}\int_{\T^2} \beta(\rho(x,t))\,dx +\nu \int_{\T^2}\beta''(\rho(x,t))|\nabla \rho(x,t)|^2\,dx\\
    &=\int_{\T^2} \beta'(\rho(x,t))f(x,t)\,dx.
        \end{align}
        Let $p\in (1,\infty)$. By a standard approximation argument we can chose $\beta(\cdot)=|\cdot|^p$ to get 
        $$
        \frac{d}{dt}\|\rho(t)\|^p_{L^p}\leq p \int_{\T^2} |\rho(x,t)|^{p-1}|f(x,t)|\,dx \leq p \left(\|\rho(t)\|_{L^p}^p\right)^{\frac{p-1}{p}} \|f(t)\|_{L^p}\qquad \forall t>0.
        $$
        Then, by the comparison principle, $\|\rho(t)\|^{p}_{L^p}$ must stay below the solution
        $$
        \left(\|\rho_0\|_{L^p} +\int_0^t\|f(\tau)\|_{L^p}\,d\tau\right)^p
        $$
        of the corresponding ODE. This proves \eqref{stab Lp est} for any $p\in (1,\infty)$. The cases $p=1$ and $p=\infty$ can be obtained by letting $p$ converge to the endpoints.
\end{proof}

\bibliographystyle{plain} 
\bibliography{biblio}

\end{document}